\let\ORGhypersetup\hypersetup
\protected\def\hypersetup{\ORGhypersetup}
	\def\hypersetup#1{}%
	\let\Cref\crtCref
	\let\cref\crtcref
\newcommand{\diffc}{D}
\let\vec\relax
\newcommand{\vec}[1]{\bm{#1}}
\newcommand{\Lipschitz}{{Lip}}
\newcommand{\R}{{\mathbb R}}
\newcommand{\Rd}{{\R^d}}
\newcommand{\diver}{\nabla \cdot}
\DeclareMathOperator{\sign}{sign}
\DeclareMathOperator{\supp}{supp}
\DeclareMathOperator{\Lip}{Lip}
\newcommand*\diff{\mathop{}\!\mathrm{d}}
\DeclareFontFamily{U}{matha}{\hyphenchar\font45}
\DeclareFontShape{U}{matha}{m}{n}{
	<-6> matha5 <6-7> matha6 <7-8> matha7
	<8-9> matha8 <9-10> matha9
	<10-12> matha10 <12-> matha12
}{}
\DeclareSymbolFont{matha}{U}{matha}{m}{n}
\DeclareFontFamily{U}{mathx}{\hyphenchar\font45}
\DeclareFontShape{U}{mathx}{m}{n}{
	<-6> mathx5 <6-7> mathx6 <7-8> mathx7
	<8-9> mathx8 <9-10> mathx9
	<10-12> mathx10 <12-> mathx12
}{}
\DeclareSymbolFont{mathx}{U}{mathx}{m}{n}
\DeclareMathDelimiter{\vvvert} {0}{matha}{"7E}{mathx}{"17}%
\DeclarePairedDelimiterX{\normiii}[1]
{\vvvert}
{\vvvert}
{\ifblank{#1}{\:\cdot\:}{#1}}
\newtheorem{theorem}{Theorem}[section]
\newtheorem{proposition}[theorem]{Proposition}%
\newtheorem{corollary}[theorem]{Corollary}%
\newtheorem{lemma}[theorem]{Lemma}%
\theoremstyle{definition}
\newtheorem{definition}[theorem]{Definition}%
\newtheorem{remark}[theorem]{Remark}%
\renewcommand*{\@fnsymbol}[1]{\ensuremath{\ifcase#1\or \star \or \dagger\or \ddagger\or
		\mathsection\or \mathparagraph\or \|\or **\or \dagger\dagger
		\or \ddagger\ddagger \else\@ctrerr\fi}}
\title{
	Interpreting systems of continuity equations in spaces 
	\\ 
	of probability measures through PDE duality}
\author{
	Jos\'e A. Carrillo%
	\thanks{Mathematical Institute, University of Oxford, Oxford OX2 6GG, UK.  \href{mailto:carrillo@maths.ox.ac.uk}{carrillo@maths.ox.ac.uk}} %
	\and 
	David Gómez-Castro%
	\thanks{Mathematical Institute, University of Oxford, Oxford OX2 6GG, UK}
    \thanks{Departamento de Matemáticas, Universidad Autónoma de Madrid, 28049 Madrid, Spain.  \href{mailto:david.gomezcastro@uam.es}{david.gomezcastro@uam.es}}
}
\begin{document}

\maketitle

\begin{abstract}
We introduce a notion of duality solution for a single or a system of transport equations in spaces of probability measures reminiscent of the viscosity solution notion for nonlinear parabolic equations. Our notion of solution by duality is, under suitable assumptions, equivalent to gradient flow solutions in case the single/system of equations has this structure. In contrast, we can deal with a quite general system of nonlinear non-local, diffusive or not, system of PDEs without any variational structure.

\textbf{Mathematics Subject Classification (2020):}
35F55  	%
35G55  	%
35L65  	%
35D40  	%

\end{abstract}

\section{Introduction}
Many PDE modelling instances of applied analysis lead to transport equations for a \textit{density function} $\rho$ of the form $\partial_t \rho = \diver (\rho \vec v)$, where $t> 0$, $x \in \Rd$, and $\vec v$ is the \textit{velocity field}. 
In many of these applications, there is a ubiquitous choice of convolutional-type velocities depending on the density well motivated by applications in mathematical biology, social sciences and neural networks, see for instance \cite{ME99,TBL06,APS06,MT15,CHS18,CMSTT19,FernandezRealFigalli}. 
Moreover, in many of these applications, $\bm v$ is given by a function of $\rho$ itself, and it may happen that the density $\rho$ is not an integrable function, but rather a measure. Since the mathematical treatment of these problems is rather tricky, and indeed many ``non-physical'' solutions may appear, the natural way to approach these problems is the so-called vanishing-viscosity method. With this method linear diffusion $D \Delta \rho$ is included to the right-hand side, leading to the problem
\begin{equation*}
    \partial_t \rho = \diver (\rho \vec v[\rho]) + D \Delta \rho.
\end{equation*}
The ``physical'' or \textit{entropic} solution is the one obtained in the limit $D \searrow 0$. In this work, we deal with systems of such equations, where several \emph{species} inhabit a domain. We will consider that the evolution of these species is coupled only by the velocity field. We are interested in systems of $n$ aggregation-diffusion equations of the form
\begin{gather}
\label{eq:main}
	\partial_t \mu^i_t = \diver \left( \mu^i_t {\bm K}^{i}_t [\vec \mu_t]   \right) + \diffc^i \Delta \mu_t^i , \qquad i = 1, \cdots, n \,\text{ and }\, (t,x) \in (0,\infty) \times  \Rd
\end{gather}
where $\diffc^i \ge 0$ and $\bm K^i_t$ are velocity-fields depending non-locally on the full vector of densities $\vec \mu = (\mu^1, \cdots, \mu^n)$ and possibly in the spatial or time variables, under some hypotheses described below.
In fact, we are to work with a generalisation allowing $K_t$ to depend on the previous time states see \eqref{eq:main generalised}.
Notice that we cover the cases with and without diffusion ($D_i>0$ and $D_i =0$) in a unified framework.
We will introduce a new notion of solution, which we call dual-viscosity solution, with well-posedness, and that is able to detect this vanishing-viscosity limit.

A particularly interesting case is when the problem \eqref{eq:main} has a 2-Wasserstein gradient-flow structure. For example, in the scalar setting ($n = 1$) without diffusion ($D=0$) and with an interaction potential energy functional of the form
\begin{equation*}
	{ \bm K} [ \mu] = \nabla \frac{\delta \mathcal F}{\delta \rho} [ \mu], \qquad \mathcal F [\mu] = \frac 1 2  \int_\Rd \int_\Rd W(x-y) \diff \mu (x) \diff \mu (y) ,
\end{equation*}
the classical gradient flow theory developed in \cite{Ambrosio2005, AmbrosioSavare2007} applies when $W$ is $\lambda$-convex and smooth, and this later generalised to different settings \cite{CDFLS11,BLL12,DiFrancesco2013,BCLR13,JV16,CT16,CKY18,CDEFS18,DES21,EPSS21} for instance. This can also be applied to \eqref{eq:main} with $D > 0$ by including the Boltzmann entropy in the free energy functional, see  \cite{CarrilloMcCannVillani2003,CarrilloMcCannVillani2006,Tug13,ZM15,BDZ17,CGPS20,MF20,CGYZ22}. We refer to \cite{CCY19} for a recent survey of results in aggregation-diffusion equations. We show in \Cref{sec:gradient flows} that our new notion of dual viscosity solutions is equivalent, in some examples, to the notion of gradient flow solutions. 

In many situations, a theory of entropy solutions in the sense of Kruzkov (see \cite{Kruzkov1970,Carrillo1999}) is possible, when we have initial data and solutions in $L^1 (\Rd) \cap L^\infty (\Rd) \cap BV (\Rd)$. For example, the work \cite{DiFrancesco2019} deals with $d = n = 1$, $\diffc = 0$ and $K =\theta(\mu)  W' * \mu$ where $\theta(M) = 0$ for some $M > 0$, and $W$ very regular. Then, clearly initial data $0 \le  \rho_0 \le M $ lead to solutions $0 \le \rho_t \le M$. The authors prove uniqueness of entropy solutions in the sense of Kruzkov in the space $L^\infty (0,T; L^1 (\Rd) \cap L^\infty (\Rd) \cap BV(\Rd))$, by using the continuous dependence results in \cite{Karlsen2003} and a fixed point argument. This requires fairly strict hypothesis on the regularity of the initial data (namely bounded variation). They also introduce an interesting constructive method based on sums of characteristics of domains evolving in time. A recent extension of \cite{DiFrancesco2019} to related problems can be seen in \cite{FagioliTse2022}.

However, in the diffusion-less regime $D = 0$, these models based on transport equations can describe particle systems described by Dirac deltas, which interact through an ODE. The idea of working with Dirac deltas leads to numerical methods for general integrable data by approximating the initial datum by a sum of Diracs and regularizing the entropy \cite{CB16,CCP19,CHWW20}. 
Kruzkov's notion of entropy solutions cannot be extended to measures.
Furthermore, for less regular $W$ (or $\bm K [ \mu ] = \nabla V(x)$ known), the finite-time formation of Dirac deltas even for smooth initial data is not known. The paper \cite{DiFrancesco2013} studies uniqueness of distributional solutions for $n = 2$, when $\diffc^i = 0$, 
\begin{equation} 
	\label{eq:set-up DiFrancesco2013}
	\bm K^{i} [\vec \mu] = \nabla \mathcal H_i * \mu^i + \nabla \mathcal K_i * \mu^j , \qquad \{j \} = \{1,2 \} \setminus \{i\}
\end{equation}
where $\mathcal H_i, \mathcal K_i \in \mathcal W^{2,\infty}$.
The authors indicate that any distributional solution is a push-forward solution (pointing to \cite[Chapter 8]{Ambrosio2005}) of the flow that solves
\begin{equation}
	\frac{\diff X^i_t}{\diff t} = \bm K^i_t [ \vec  \mu_t   ] (X^i_t), \qquad X_0^i (y) = y.
\end{equation}
This push-forward structure is not available for problems with diffusion. The main goal of this work is to construct a notion of well-posed solutions that works for the case 
\begin{equation}
\label{eq:gradient flow of convolution}
    {\bm K}^{i} [\vec \mu] = \nabla  \sum_{j=1}^n W_{ij} * \mu^j,
\end{equation}
$d, n \ge 1$, $D \ge 0$, and measure initial data. We will work in $1$-Wasserstein space, and recover con\-ti\-nuous dependence estimates by discussing the ``dual'' problem, for which we use viscosity solutions by studying the corresponding Lipschitz estimates. We show also how our results can be adapted to different spaces (e.g., $\dot H^{-1}$ and $H^1$).

So far, the velocity field depends only on $t$ and $\vec \mu_t$. We can consider a  
generalisation of $K$, denoted $\mathfrak K$, that depends
also on the past.
We denote this extended notation by $\mathfrak K [\overline \mu]_t$.
By saying that $\mathfrak K$ depends only on the past we mean that $0 \le t \le s \le T$ and $\vec \mu \in C([0,T]; \mathcal P_1 (\Rd)^n)$ then
\begin{equation}
\tag{H}
\label{eq:continuation}
   ({\bm {\mathfrak K}}^{i}[\vec \mu|_{[0,s]}])_t = ({\bm {\mathfrak K}}^{i}[\vec \mu|_{[0,t]}])_t.
\end{equation}
To simplify the notation, when it will not lead to confusion,
we simply write $\mathfrak K[\mu]_t$.
For example, we cover the case
$$
    \mathfrak K[\mu]_t = \int_0^t \nabla W_s * \mu_s \diff s,
$$
where $W_t$ are a family of convolution kernels. 
Hence, we extend the \eqref{eq:main} to the more general
\begin{gather}
\tag{$\mathrm{P}$}
\label{eq:main generalised}
    \partial_t \mu^i_t = \diver \Big( \mu^i_t ({\bm {\mathfrak K}}^{i}[\vec \mu])_t  \Big) + \diffc^i \Delta \mu_t^i , \qquad i = 1, \cdots, n \,\text{ and }\, (t,x) \in (0,\infty) \times  \Rd.
\end{gather}
This covers the previous setting by defining $$ (\bm {\mathfrak K}[\vec \mu])^i _t = {\bm K}_t^i[\vec \mu_t].$$

The structure of the paper is as follows.
\Cref{sec:main results} is devoted to the main results and methods. We begin in \Cref{sec:definition of dual viscosity solution} by motivating and introducing a new notion of solution in $1$-Wasserstein space, motivated by the duality with viscosity solution of the dual problem, which we call \textit{dual viscosity solution}. In \Cref{sec:theorem statement K regular} we present well-posedness results for ${\bm K}$ regular enough. In \Cref{sec:thm stability} we present a result of stability under passage to the limit. In \Cref{sec:Newtonian} we discuss a limit case of the regularity
assumptions of the well-posedness theory,
given by the Newtonian potential. We close the presentation of the main results with \Cref{sec:open problem} on open problems our current results do not cover, mainly non-linear diffusion and non-linear mobility.
The full proofs are postponed to the next sections. First, in \Cref{sec:solutions of PE*} we prove well-posedness and estimates for the dual problem of \eqref{eq:main}, where we assume ${\bm K}[\mu]$ is replaced by a known field. Then, in \Cref{sec:main E known} we prove well-posedness of \eqref{eq:main} when ${\bm K}[\mu]$ 
is again replaced 
by a known field. In \Cref{sec:proof of main theorem} we prove the results stated in \Cref{sec:theorem statement K regular} by a fixed-point argument based on the previous estimates.

We conclude the paper with two sections on comments and extensions of our main results. First, in \Cref{sec:gradient flows}, we show that, where a gradient-flow structure is available, our solutions coincide with the steepest descent solutions, under some assumptions. 
In \Cref{sec:well-posedness in H-1} we extend our main results from the $1$-Wasserstein framework to the negative homogeneous Sobolev space $\dot H^{-1}$. We recall that $\dot H^{-1}$ is related to the $2$-Wasserstein space (see below).

\section{Main results and methods}
\label{sec:main results}
 
\subsection{A new notion of solution via duality}
\label{sec:definition of dual viscosity solution}

Considering first the case $n = 1$, let ${\bm E}_t = - {\bm {\mathfrak K}}[\mu]_t$, so that we are looking at the problem
\begin{gather}
\tag{P$_{\bm E}$}
\label{eq:main E known}
	\partial_t \mu_t = -\diver \left( {\bm E}_t \mu_t  \right) + \diffc \Delta \mu_t.
\end{gather}
Furthermore, if we naively assume the equation to be satisfied pointwise, we can multiply by a test function $\varphi$ and assume we can integrate by parts integrate by parts, we recover
\begin{equation}
\label{eq:weak form 1}
	\int_\Rd \varphi_T \diff \mu_T+ \int_0^T \int_\Rd \left( -\frac{\partial \varphi_t}{\partial t} - {\bm E}_t \nabla \varphi_t - \diffc \Delta \varphi_t    \right) \diff \mu_t \diff t = \int_\Rd \varphi_0  \diff \mu_0.
\end{equation} 
To cancel the double integral, we take $\varphi_t = \psi_{T-t}$ and $\psi_s$ a solution of
\begin{gather}
\label{eq:dual E known}
\tag{P$_{\bm E}^*$}
	    \frac{\partial \psi_s}{\partial s}  = {\bm E}_{T-s}  \nabla \psi_s + \diffc \Delta \psi_s  , \qquad  s \in [0,T], 
\end{gather} 
and $\psi_s$ Lipschitz in $x$.
Using $\varphi_t = \psi_{T-t}$ we reduce \eqref{eq:weak form 1} to the equivalent formulation
\begin{gather}
\tag{D$_{\bm E}$}
\label{eq:duality E known}
    \int_\Rd \psi_0 \diff \mu_T = \int_\Rd \psi_T \diff \mu_0.
\end{gather}
This is an interesting formulation because it will allow us to exploit the duality properties of the spaces 
for $\mu$ and $\psi$.

Hence, we are exploiting the well-known duality between the continuity problem \eqref{eq:main E known} and its dual \eqref{eq:dual E known}.

When $\diffc = 0$, this connects with the push-forward formulation for first order problems. Problem \eqref{eq:dual E known} can be solved by a generalised characteristics field $X_t$ (e.g., \cite[Section 3.2, p.97]{Evans1998}), and \eqref{eq:duality E known} means precisely that $\mu_T $ is the push-forward by the same field (see, e.g., \cite[Section 5.2, p.118]{Ambrosio2005}).

Notice that in the general setting we have $n$ equations, and $\psi$ depends on $T$. Hence, we denote our test functions $\psi^{T,i}$. We consider the system of $n$ dual problems
\begin{gather}
\label{eq:dual}
\tag{P$^*_i$}
\begin{dcases}
    \frac{\partial \psi_s^{T,i}}{\partial s}  = - ({\bm {\mathfrak K}}^i[\vec \mu|_{[0,T-s]} ])_{T-s}  \nabla \psi_s^{T,i} + \diffc^i \Delta \psi^{T,i}_s   & \text{for all } s \in [0,T], y \in \Rd  \\
    \psi_0^{T,i} = \psi_0^i.
\end{dcases}
\end{gather}
We also recover $n$ duality conditions
\begin{gather}
	\label{eq:duality}
	\tag{D$_i$}
	\int_\Rd \psi^{T,i}_0 \diff \mu^i_T  = \int_\Rd \psi^{T,i}_T \diff \mu^i_0, \qquad i = 1, \cdots, n.
\end{gather}

\begin{remark}
    Notice that we have not introduced a condition as $|x| \to \infty$ for $\psi$. 
    In the formal derivation of the notion of solution, we have required that we can formally 
    integrate
    by parts.
    This is also the standard argument to define distributional solutions.
    {%
        As with distributional solutions, we set \eqref{eq:duality} as our definition and prove that this is mathematically sound in terms well-posedness.
        We do not claim the PDE is satisfied in any stronger sense.
        We will make a comment on gradient flow solutions.
        Furthermore,%
    }
    we will not rigorously integrate by parts at any point in this manuscript.
    We will actually discuss in detail the precise admissible initial data $\psi_0$ for \eqref{eq:dual E known}.
    When we study solutions in $1$-Wasserstein space we will only assume that $\psi$ are Lipschitz (but not bounded). 
    The uniqueness of solutions of \eqref{eq:dual E known} in that setting is discussed in \Cref{sec:solutions of PE*}.
    In \Cref{sec:well-posedness in H-1} we extend the results to negative Sobolev spaces, and thus we will use some decay (see
    \Cref{rem:dot H -1 compact support}).
\end{remark}

\begin{remark}
	Notice that the problems for the different $\psi^i$ are de-coupled from each other. 
\end{remark}

It is well-known that classical solutions of \eqref{eq:dual E known} may not exist, specially in the case $\diffc = 0$. The approach developed by Crandall, Ishii and Lions to deal with limit is the notion of viscosity solution (see, e.g., \cite{crandall+ishii+lions1992users-guide-viscosity}), which we introduce below. 
Since the theory of viscosity solutions is usually well-posed for Lipschitz functions $\psi$, a natural metric for $\mu$ is the $1$-Wasserstein distance
(see, e.g., \cite[p. 207]{Villani2003}).
We recall that the space $\mathcal P_1 (\Rd)$ is the space of the probability measures $\mu$ such that
$$
    \int_\Rd |x| \diff \mu (x) < \infty.
$$
The distance $d_1$ that makes it a complete metric space is usually constructed by a Kantorovich optimal transport problem.
The reason we can exploit the duality between \eqref{eq:main E known} and \eqref{eq:dual E known} is the following well-known duality characterisation (see \cite[Theorem 1.14]{Villani2003}): if $\mu, \widehat \mu \in \mathcal P_1 (\Rd)$ 
then
\begin{equation}
\label{eq:duality characterisation of d1}
	d_1 (\mu, \widehat \mu) = \sup \left \{  \int_\Rd \psi \diff (\mu - \widehat \mu)  : \psi \in C (\Rd) \text{ such that }\Lip(\psi) \le 1    \right \},
\end{equation} 
where, here and below
\begin{equation*}
    \Lip(\psi) = \sup_{x \ne y } \frac{  |\psi(x) - \psi(y)|  }{|x-y|}.
\end{equation*}

We recall the notion of viscosity solution.
\begin{definition}
    We say that $\psi \in C([0,T] \times \Rd)$ is a viscosity sub-solution of
    \begin{equation}
    \label{eq:dual linear}
        \partial_s \psi_s = {\bm E}_{T-s} \nabla \psi_s + \diffc \Delta \psi_s 
    \end{equation}
    if, for every $z_0 = (s_0, x_0) \in [0,T] \times \Rd$ and $U$ neighbourhood of $z_0$ and $\varphi \in C^2(U)$ touching $\psi$ from above (i.e., $\varphi \ge \psi$ on $U$ and $\varphi(z_0) = \psi(z_0)$) we have that
    \begin{equation*}
        \frac{\partial \varphi}{\partial s} (z_0) \le  {\bm E}_{T-s_0} (x_0)  \nabla \varphi (z_0)  + \diffc \Delta \varphi (z_0).
    \end{equation*}
    Conversely, we say that $\psi \in C([0,T] \times \Rd)$ is a super-solution if the inequalities above are reversed for functions touching from below. 
    We say that $\psi \in C([0,T] \times \Rd)$ is a viscosity solution if it is both a viscosity sub and super solution.
\end{definition}
Thus, we introduce the following notion of solution of \eqref{eq:main}:
\begin{definition}
    \label{defn:dual viscosity}
	We say that $(\vec \mu, \{\vec \Psi^T\}_{T\ge 0})$ is an entropy pair
	if:
	\begin{enumerate}
		\item For every $T \ge 0$, 
		$$
		    \vec \Psi^T: \{ \psi_0 : \nabla \psi_0 \in L^\infty (\Rd) \} \longrightarrow 
            \left\{ 
                (\psi^1, \cdots, \psi^n) \in C([0,T] \times \Rd )^n : 
                \frac{\psi^i}{1+|x|} \in L^\infty ((0,T) \times \Rd) 
            \right\}
		$$ 
		is a linear map with the following property: for every $\psi_0$ and $i = 1, \cdots, n$ we have $\psi^{T,i} = \Psi^{T,i}[\psi_0]$ is a viscosity solution of \eqref{eq:dual}.
		\item For each $i = 1, \cdots, n$ and $T \ge 0$, $\mu_T^i \in \mathcal P_1 (\Rd)$ and satisfies the duality condition \eqref{eq:duality}.
	\end{enumerate}
\end{definition}

\begin{remark}
    Notice that in our definition we are not assuming that $\psi_0$ are bounded or decay at infinity, only that they are Lipschitz continuous. This means that they are bounded by $C(1+|x|)$, so they can be integrated against functions in $\mathcal P_1(\Rd)$.
\end{remark}

For convenience, we will simply denote $\vec \Psi = \{ \vec \Psi^T \}_{T\ge 0} $.

\begin{definition}
    We say that $\vec \mu$ is a dual viscosity solution if there exists $\vec \Psi$ so that $(\vec \mu, \vec \Psi)$ is an entropy pair.
\end{definition}

These definitions can be extended to other norms where a duality characterisation exists. The $2$-Wasserstein distance, which is natural for many problems because of the gradient-flow structure, does not have 
any known
such characterisation. However, other spaces like $\dot H^{-1}$ do. We present extension of our results to this setting in \Cref{sec:well-posedness in H-1}.

\begin{remark}
Notice that this duality characterisation is somewhat in the spirit of the Benamou-Brenier formula for the $2$-Wasserstein distance  between two measures, where the optimality conditions involve viscosity solutions of a Hamilton-Jacobi equation.
\end{remark}

\begin{remark}
Notice that we have not requested the time continuity of $\mu$, so the notion of initial trace is not clear. However, it will follow from the continuity of $\psi^T$. Under some assumptions on ${\bm {\mathfrak K}}$ we will show that $\mu \in C([0,T]; \mathcal P_1 (\Rd))$. However, in more general settings, the definition guarantees that the initial trace is satisfied in the weak-$\star$ sense. Notice that if $\psi_0 \in C_c$ and $\psi_t$ is continuous in time then
$$
    \int_\Rd \psi_0 \diff \mu_t = \int_\Rd \psi_t \diff \mu_0 \to \int_\Rd \psi_0 \diff \mu_0.
$$
\end{remark}

\begin{remark}[General linear diffusions]
Our definitions, methods, and results can be extended to the case where $\diffc \Delta u$ is replaced by any other linear operator that commutes with $\partial_x$, and that satisfies the maximum principle, e.g., the fractional Laplacian $\diffc (-\Delta)^s$.
\end{remark}

\subsection{Well-posedness when \texorpdfstring{${\bm K}$}{K} is regularising}
\label{sec:theorem statement K regular}

So far, ${\bm E}_t = -{\bm K}_t [\bm \mu_t]$, i.e., we assume that at every time $t$ the convection comes from a function ${\bm K}_t : \bm \mu \to {\bm E}$. This covers, for example, non-local operators in space. However, our arguments extend to much broader arguments, where ${\bm K}$ could be non-local in time as well. 
We will consider the more general case of
\begin{equation*}
    \bm {\mathfrak K} : C([0,t];X ) \to C([0,t] ; Y ), \qquad \forall t \in [0,T].
\end{equation*}
The results in this section correspond to $X = \mathcal P_1(\Rd)^n$ and $Y$ the space
\begin{equation} 
    \Lipschitz_0 (\Rd; \Rd) =  \left \{ {\bm E} \in C_{loc} (\Rd; \Rd) :   |\nabla {\bm E}| \in L^\infty (\Rd) \right \}
\end{equation} 
However, the scheme of the proof is general and can be extended to other $X,Y$. In \Cref{sec:well-posedness in H-1} we work on the negative homogeneous Sobolev space.

Notice that $\Lipschitz_0 (\Rd; \Rd)$ is very similar $\dot W^{1,\infty}$, but we avoid this terminology to endow it with the following special norm
\begin{equation*}
    \| {\bm E} \|_{\Lipschitz_0} = \| \nabla {\bm E} \|_{L^\infty} + \left\| \frac{{\bm E}}{1 + |x|} \right\|_{L^\infty}.
\end{equation*}
The reason for the $\Lipschitz_0$ nomenclature is that
\begin{equation*}
    |{\bm E}(0)| + \|\nabla {\bm E}\|_{L^\infty} \le \| {\bm E} \|_{\Lipschitz_0} \le 2 (|{\bm E}(0)| + \|\nabla {\bm E}\|_{L^\infty}),
\end{equation*}
so it guarantees the bound
of $\bm E(0)$.
It is not difficult to see that this is a Banach space.
In this setting, we prove the following well-posedness result
\begin{theorem}
    \label{thm:well-posedness in P1}
 Let $\diffc \ge 0$, $T > 0$,
 and assume that for all $t > 0$
        \begin{equation}
            \label{hyp:P1 domain}
            \bm {\mathfrak K}^i : C([0,t]; \mathcal P_1(\Rd)^n) \to C([0,t]; \Lipschitz_0 (\Rd; \Rd)), 
        \end{equation}
        with the property \eqref{eq:continuation},
        maps bounded sets into bounded sets, and satisfies the following Lipschitz condition
        \begin{equation}
            \label{hyp:P1 Lipschitz}
            \sup_{\substack { t \in [0,T]\\ i=1,\cdots,n}} \left\| \frac{\bm {\mathfrak K}^i[\vec \mu]_t - \bm {\mathfrak K}^i[ \widehat {\vec \mu}]_t}{1 +|x|} \right\|_{L^\infty (\Rd)} \le L \, \sup_{\substack { t \in [0,T]\\ i=1,\cdots,n}} d_1 (\mu_t^i, \widehat \mu _t^i ).
        \end{equation}
Then, for each $\vec \mu_0 \in \mathcal P_1 (\Rd)^n$ there exists a unique dual viscosity solution of \eqref{eq:main generalised},
$\mu \in C([0,T]; \mathcal P_1(\Rd)^n)$, and it depends continuously on the initial datum with respect to the $d_1$ distance. This solution $\mu$ also satisfies \eqref{eq:main generalised} in distributional sense.
Furthermore, if $\bm {\mathfrak K}^i[\vec \mu]_t = K^i[\vec \mu_t]$, then the map $S_T : \bm  \mu_0 \in \mathcal P_1 (\Rd)^n \mapsto \bm \mu_T \in \mathcal P_1(\Rd)^n$ is a continuous semigroup.
\end{theorem}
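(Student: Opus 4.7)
The plan is a Banach fixed-point argument on $C([0,T_0]; \mathcal P_1(\Rd)^n)$ for small $T_0$, built on top of the linear theories developed in \Cref{sec:solutions of PE*} (for the dual problem with prescribed field) and \Cref{sec:main E known} (for the primal problem with prescribed field). Given $\vec \nu \in C([0,T_0]; \mathcal P_1(\Rd)^n)$, set $\bm E^i = -\bm{\mathfrak K}^i[\vec \nu]$, which lies in $C([0,T];\Lipschitz_0(\Rd;\Rd))$ by \eqref{hyp:P1 domain}, and define $\Phi(\vec \nu)^i$ to be the unique curve in $\mathcal P_1(\Rd)$ determined by the duality relation (D$_{\bm E}$) against viscosity solutions $\psi^{T,i}$ of (P$_{\bm E}^*$) with initial datum $\mu_0^i$.

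From \Cref{sec:solutions of PE*} I would borrow existence, uniqueness and Gr\"onwall propagation of both $\Lip(\psi^{T,i})$ and of the linear growth constant of $\psi^{T,i}$. From \Cref{sec:main E known} I would borrow the $d_1$-stability estimate of the form
\[
    \sup_{t \in [0,T_0]} d_1(\mu_t^i, \widehat\mu_t^i) \le e^{C T_0}\, d_1(\mu_0^i,\widehat\mu_0^i) + C\int_0^{T_0} \Bigl\| \tfrac{\bm E_s^i - \widehat{\bm E}_s^i}{1+|x|} \Bigr\|_{L^\infty} \diff s,
\]
together with the fact that any curve produced by the duality identity against viscosity solutions is automatically a distributional solution of the linear advection--diffusion equation. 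Inserting hypothesis \eqref{hyp:P1 Lipschitz} into this estimate gives
\[
    \sup_{i,t\in[0,T_0]} d_1\bigl(\Phi(\vec \nu)_t^i, \Phi(\widehat{\vec \nu})_t^i\bigr) \le C L\, T_0\, \sup_{i,t \in [0,T_0]} d_1(\nu_t^i, \widehat\nu_t^i),
\]
so $\Phi$ is a contraction once $T_0$ is small enough depending only on $L$ and on the bound on $\bm E^i$; the bounded-sets-to-bounded-sets half of \eqref{hyp:P1 domain} is used to ensure $\Phi$ preserves a suitable closed ball in $C([0,T_0];\mathcal P_1(\Rd)^n)$ via a uniform first-moment estimate.

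Banach's theorem then produces a unique dual viscosity solution on $[0,T_0]$ which, by the previous step, is also distributional. Since $T_0$ depends only on the uniform data, the construction iterates and covers $[0,T]$. Continuous dependence on $\vec \mu_0$ in $d_1$ follows from the same stability estimate combined with a Gr\"onwall argument applied to $t \mapsto \sup_{i,s\le t}\, d_1(\mu_s^i, \widehat\mu_s^i)$. For the semigroup claim, when $\bm{\mathfrak K}^i[\vec \mu]_t = \bm K^i[\vec \mu_t]$ the problem is autonomous, so for any $s\in [0,T]$ the curve $t\mapsto \vec \mu_{t+s}$ satisfies the same hypotheses with initial datum $\vec \mu_s$; uniqueness yields $S_{t+s} = S_t\circ S_s$, and continuity of $S_T$ is exactly the continuous dependence just established.

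The main obstacle is not the fixed-point step itself but the underlying linear theory of \Cref{sec:solutions of PE*} and \Cref{sec:main E known}: the non-standard $\Lipschitz_0$ norm is chosen precisely so that viscosity solutions of (P$_{\bm E}^*$) propagate both a Lipschitz constant \emph{and} a linear-growth bound, which is what allows the duality identity (D$_{\bm E}$) to produce a curve with finite first moment and to give the weighted-$L^\infty$ stability estimate in $\bm E$. The weight $1+|x|$ on the left-hand side of \eqref{hyp:P1 Lipschitz} must match the one dictated by Kantorovich--Rubinstein \eqref{eq:duality characterisation of d1} applied to $\psi^{T,i}$ of unbounded range; an unweighted $L^\infty$ hypothesis on $\bm E$ would simply fail to close the contraction in $\mathcal P_1$.
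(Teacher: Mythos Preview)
Your proposal is correct and follows essentially the same route as the paper: a Banach fixed-point on $C([0,T_0];\mathcal P_1(\Rd)^n)$ built on the linear semigroup of \Cref{prop:main E known continuous dependence}, with the bounded-sets hypothesis used to trap the iteration in a ball $B_{\mathcal P_1}(R)^n$ and \eqref{hyp:P1 Lipschitz} fed into \eqref{eq:ST P1 weigth 1 + |x|} to obtain the contraction; extension to $[0,T]$, continuous dependence, and the semigroup property are handled exactly as you describe. Your closing remark about the weight $1+|x|$ being forced by the pairing of \eqref{eq:duality characterisation of d1} with the linear-growth propagation of $\psi^{T,i}$ is precisely the point of \Cref{lem:continuous dependence} and \eqref{eq:dual psi/v bound}.
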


Going back to \eqref{eq:duality characterisation of d1} we have that
\begin{equation*}
    \left| \int_\Rd \psi \diff (\mu - \widehat \mu) \right| \le d_1 (\mu, \widehat \mu) \Lip (\psi), \qquad \forall \psi \text{ such that } \Lip(\psi) < \infty.
\end{equation*}
Thus, we can use the duality relation \eqref{eq:duality} to get the following upper bound
\begin{equation} 
	 \label{eq:duality P1 and weighted Linfty deduction}
	\begin{aligned}
	    d_1 (\mu_T, \widehat \mu_T) &= \sup_{\substack{ \Lip(\psi_0) \le 1 } }  \int_\Rd \psi_0 \diff (\mu_T - \widehat \mu_T) \\
	    &= \sup_{\Lip(\psi_0) \le 1} \left(  \int_\Rd \psi_T^T \diff \mu_0 -  \int_\Rd \widehat \psi_T^T \diff \widehat \mu_0 \right) \\
	    &= \sup_{\Lip(\psi_0) \le 1} \left( \int_\Rd  \psi_T^T \diff (\mu_0 - \widehat \mu_0) +  \int_\Rd ( \psi_T^T - \widehat \psi_T^T ) \diff \widehat \mu_0  \right) \\
	    &\le d_1(\mu_0 , \widehat \mu_0) \sup_{\Lip(\psi_0) \le 1} \Lip(\psi_T^T)  + \sup_{\Lip(\psi_0) \le 1} \int_\Rd ( \psi_T^T - \widehat \psi_T^T ) \diff \widehat \mu_0.  
	\end{aligned}
\end{equation} 
To exploit this fact we take advantage of the following, rather obvious, estimate
\begin{align*}
    \left| \int \psi(x) \diff \mu (x) \right| & \le  \left\|  \frac{\psi}{1+|x|} \right \|_{L^\infty} \left( 1 + \int_\Rd |x| \diff \mu(x) \right). 
\end{align*}
In fact, we show that, on the second supremum in \eqref{eq:duality P1 and weighted Linfty deduction}, we can focus on functions such that $\psi_0(0) = 0$. The reason for this election will be made clearer later.
\begin{lemma}
\label{lem:continuous dependence}
Let $\mu$ and $\widehat \mu$ two dual viscosity solutions of \eqref{eq:main E known} such that ${\bm E}, \widehat {\bm E} \in C([0,T];\Lip_0 (\Rd, \Rd))$. Then, letting $\psi^T$ and $\widehat \psi^T$ be the corresponding solutions of the dual problems \eqref{eq:dual E known}, we have that
\begin{equation}
    \label{eq:duality P1 and weighted Linfty}
    d_1 (\mu_T, \widehat \mu_T) \le  d_1(\mu_0 , \widehat \mu_0) \sup_{\substack{\Lip(\psi_0) \le 1  }} \Lip(\psi_T^T) + \int_\Rd (1 + |x| ) \diff \widehat \mu_0(x)   \sup_{ \substack{ \Lip(\psi_0) \le 1 \\ \psi_0(0) = 0 }} \left\|  \frac{\psi_T^T - \widehat \psi_T^T}{{1+|x|}} \right \|_{L^\infty} .
\end{equation} 
\end{lemma}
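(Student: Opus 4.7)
The plan is to take as starting point the chain of inequalities displayed in \eqref{eq:duality P1 and weighted Linfty deduction}, which, via the Kantorovich--Rubinstein duality \eqref{eq:duality characterisation of d1} combined with the duality relations \eqref{eq:duality}, already delivers the first term on the right-hand side of \eqref{eq:duality P1 and weighted Linfty} and reduces the claim to upgrading
\begin{equation*}
    \sup_{\Lip(\psi_0)\le 1} \int_\Rd (\psi_T^T - \widehat \psi_T^T) \diff \widehat \mu_0
\end{equation*}
into the weighted $L^\infty$ bound against $\int_\Rd (1+|x|)\diff\widehat\mu_0$. I would do this in two steps.

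The first step is a normalisation: the supremum above does not change when restricted to initial data with $\psi_0(0) = 0$. This uses the linearity of $\vec \Psi^T$ from \Cref{defn:dual viscosity}, which for any constant $c$ gives
\begin{equation*}
    \Psi^{T,i}[\psi_0 + c] = \Psi^{T,i}[\psi_0] + c\,\Psi^{T,i}[1],
\end{equation*}
and analogously for $\widehat\Psi^{T,i}$. Since the constant $1$ is a classical, hence viscosity, solution of \eqref{eq:dual E known}, the uniqueness of viscosity solutions with bounded gradient (to be developed in \Cref{sec:solutions of PE*}) forces $\Psi^{T,i}[1] \equiv \widehat\Psi^{T,i}[1] \equiv 1$. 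Hence $\psi_T^T - \widehat\psi_T^T$ is invariant under $\psi_0 \mapsto \psi_0 - \psi_0(0)$, a substitution that preserves $\Lip(\psi_0)\le 1$, so the supremum can indeed be taken over $\{\Lip(\psi_0)\le 1,\ \psi_0(0)=0\}$.

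The second step is the elementary weighted estimate
\begin{equation*}
    \left| \int_\Rd f \diff \widehat\mu_0 \right| \le \left\| \frac{f}{1+|x|} \right\|_{L^\infty} \int_\Rd (1+|x|)\diff\widehat\mu_0(x),
\end{equation*}
applied to $f = \psi_T^T - \widehat\psi_T^T$, followed by taking the supremum over the restricted class. The only genuinely delicate point in the plan is justifying $\Psi^{T,i}[1] \equiv 1$: it requires uniqueness of viscosity solutions of \eqref{eq:dual E known} among bounded-gradient functions (not necessarily decaying at infinity, the decay condition in \eqref{eq:dual E known} being an artefact of the statement meant for decaying data). Everything else is algebraic rearrangement of suprema.
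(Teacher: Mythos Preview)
Your proposal is correct and follows essentially the same approach as the paper's own proof: both start from the chain \eqref{eq:duality P1 and weighted Linfty deduction}, use linearity of $\Psi^T$ together with the uniqueness result of \Cref{prop:dual existence Lipschitz} to argue that $\Psi^T[1]=\widehat\Psi^T[1]=1$ and hence restrict the second supremum to $\psi_0(0)=0$, and then apply the elementary weighted estimate against $\int_\Rd(1+|x|)\diff\widehat\mu_0$. Your observation that the delicate point is precisely the uniqueness justification for $\Psi^T[1]\equiv 1$ matches the paper's forward reference to \Cref{prop:dual existence Lipschitz}.
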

\begin{proof}
Notice that $\psi^T = \Psi^T[\psi_0]$, by linearity 
$$ \Psi^T[\psi_0] = \Psi^T[\psi_0 - \psi_0(0)] + \psi_0(0) \Psi^T [1] $$
By the uniqueness in \Cref{prop:dual existence Lipschitz}, which we will prove below,
since the problem does not contain a zero-order term $\Psi^T [1] = 1 =\widehat \Psi^T[1]$. Therefore, we can write
$$ \Psi^T[\psi_0] - \widehat \Psi^T[\psi_0] = \Psi^T[\psi_0 - \psi_0(0)] - \widehat \Psi^T[\psi_0 - \psi_0(0)].$$
Thus, we can take the second supremum of \eqref{eq:duality P1 and weighted Linfty deduction} exclusively on functions such that $\psi_0(0) = 0$.
\end{proof}

\begin{remark}
    We recall a basic property of the 1-Wasserstein distance. It turns out that the first moment is precisely the distance to the Dirac delta, i.e.,
    $$
            \int_\Rd  |x| \diff \mu(x) = d_1 (\mu, \delta_0).
    $$
    Hence, our fixed point argument will be performed in the balls
    \begin{equation}
    	\label{eq:ball in Wasserstein}
        B_{\mathcal P_1} (R) = \left \{ \mu \in \mathcal P_1 (\Rd)  :  \int_\Rd  |x| \diff \mu(x)  \le R \right\}.
    \end{equation} 
    This explains that it is natural to work with the metric of $(1 + |x|) L^\infty$.
\end{remark}

We will devote \Cref{sec:solutions of PE*} to the estimates of $\psi$ that allow us to control each term of \eqref{eq:duality P1 and weighted Linfty}.  The estimates are recovered by standard, albeit involved, arguments.

The first term in \eqref{eq:duality P1 and weighted Linfty}, which corresponds to continuous dependence respect to the initial datum, requires only the estimation of a coefficient. If $\bm {\mathfrak K}$ is well-behaved, we expect to obtain an estimate of the type
\begin{equation*}
    \sup_{\Lip(\psi_0) \le 1} \Lip(\psi_T^T) \le C(T, \bm {\mathfrak K}, \mu_0 ).
\end{equation*}
For the well-posedness theorem $\widehat \mu_0 = \mu_0$, we can avoid the first term in \eqref{eq:duality P1 and weighted Linfty} and this constant is not relevant.

The second term in \eqref{eq:duality P1 and weighted Linfty}, which represents continuous dependence on ${\bm E}$ for the dual problem, is harder to estimate.  
We prove first that
even if $\psi_0 \notin L^\infty $ but is Lipschitz and ${\bm E}, \widehat {\bm E}$ are bounded, then $\psi_s^T - \widehat \psi_s^T \in L^\infty(\Rd)$.
A second, more elaborate, argument will allow us to work of the case where ${\bm E}$ grows no-more-than linearly at infinity.
If \eqref{eq:dual E known} has good continuous dependence on ${\bm E}$, we can expect
\begin{equation*}
    \left\|  \frac{\psi_T^T - \widehat \psi_T^T}{{1+|x|}} \right \|_{L^\infty}  \le C \omega(T) \| \bm {\mathfrak K}[ \mu ] - \bm {\mathfrak K} [\widehat \mu ]\|,
\end{equation*}
where we leave the second norm unspecified for now. 
So, if $\bm {\mathfrak K}$ is Lipschitz in this suitable norm, then we will be able to use Banach's fixed point theorem.

\begin{remark}
\cite{Karlsen2003} prove that if $\rho_0, \widehat \rho_0 \in L^1 \cap L^\infty \cap BV$ and $\rho, \widehat \rho \in L^\infty([0,T]; L^1)$ are entropy solutions of
\begin{equation*}
    \partial_t \rho_t + \diver(f(\rho) {\bm E}(x)) = \Delta A (\rho), \qquad  \partial_t \widehat \rho + \diver(\widehat f(\widehat\rho) \widehat {\bm E}(x)) = \Delta A(\widehat\rho)  
\end{equation*}
where ${\bm E}, \widehat {\bm E} : \Rd \to \Rd$, and we have an a priori estimates $\rho, \widehat \rho : \Rd \to I \subset \R $ with $I$ bounded and
\begin{equation*}
    [\rho_t]_{BV(\Rd)} ,[\widehat \rho_t]_{BV(\Rd)} \le C_{\rho} , \qquad \forall t \in (0,T).
\end{equation*}
Then, 
\begin{align*}
    \| \rho_t - \widehat \rho_t \|_{L^1} \le \| \rho_0 - \widehat \rho_0 \|_{L^1} + C t \Big( \| {\bm E} - \widehat {\bm E} \|_{L^\infty} + [ {\bm E} - \widehat {\bm E} ]_{BV} + \| f - \widehat f \|_{W^{1,\infty}(I;\R)} \Big)
\end{align*}
where $\rho, \widehat \rho : \Rd \to I \subset \R $ and $C$ depends on $C_\rho$ and the norms of $f,\widehat f \in W^{1,\infty}$, ${\bm E}, \widehat {\bm E} \in L^\infty \cap BV$. 
In \cite{DiFrancesco2019} the authors use the fixed-point argument of entropy solutions only to prove uniqueness. Existence is done by proving the convergence of the particle systems, which is their main aim. They use \cite{Karlsen2003} which deals directly with the conservation law in $L^1$, we obtain simple estimates for the Hamilton-Jacobi dual problem in $L^\infty$ using the maximum principle.
Our duality argument is not directly extensible to $L^1 (\Rd)$.
Applying the duality characterisation of $L^1$ and \eqref{eq:duality} we can compute
\begin{align*}
   \| \mu_T - \widehat \mu_T \|_{L^1} &= \sup_{|\psi_0| \le 1}  \int_\Rd \psi_0 \diff (\mu - \widehat \mu) 
   \le \sup_{|\psi_0| \le 1} \left( \int_\Rd  \psi_T^T \diff (\mu_0 - \widehat \mu_0) +  \int_\Rd ( \psi_T^T - \widehat \psi_T^T ) \diff \widehat \mu_0  \right) 
\end{align*}
Due to the maximum principle we will show that $|\psi_s^T| \le 1$ and hence
\begin{equation*}
    \| \mu_T - \widehat \mu_T \|_{L^1} \le \| \mu_0 - \widehat \mu_0 \|_{L^1} + \| \widehat  \mu_0 \|_{L^1} \sup_{|\psi_0| \le 1} |\psi_T^T - \widehat \psi_T^T|.
\end{equation*}
There is no apparent way to bound the second supremum with a quantity related only to   $|{\bm E} - \widehat {\bm E}|$. A quantity depending on $|\nabla \psi_0|$ appears, and there is no available bound of its supremum over functions such that $|\psi_0| \le 1$.
\end{remark}

\subsection{Stability theorem}
\label{sec:thm stability}
We conclude the main results by stating a stability theorem. This allows to prove existence of solutions by approximation, in more general settings that the well-posedness theory. We will use it below for several applications.
\begin{theorem}
\label{thm:stability}
Let 
$(\vec \mu^k , \vec \Psi^k) $
be a sequence of entropy pairs in the sense of \Cref{defn:dual viscosity} corresponding to some operators $\vec {\bm {\mathfrak K}}^k$ under the assumptions of \Cref{thm:well-posedness in P1}.
Assume that
\begin{enumerate}
    \item $\diffc^k \to \diffc^\infty$.
    \item For every $t> 0$, $\vec \mu_t^k \overset \star \rightharpoonup \vec \mu_t^\infty$ in $\mathcal M (\Rd)$.
    \item For every $\psi_0 \in C_c^\infty (\Rd)$ and $T > 0$ we have $\vec \Psi^{T,k} [\psi_0] \to \vec \Psi^{T,\infty}[ \psi_0 ]$ uniformly over compacts of $[0,T] \times \Rd$.
    This allows to pass to the limit in estimate \eqref{eq:dual psi/v bound} below.
    \item $\vec {\bm {\mathfrak K}}^k [\vec \mu^k] \to \vec {\bm {\mathfrak K}}^\infty[\vec \mu^\infty]$ uniformly over compacts  of $[0,T] \times \Rd$.
    \item For every $\psi_0 \in C_c^\infty (\Rd)$ we have that
    \begin{equation*}
        \int_\Rd \vec \Psi^{T,k}[\psi_0]_T \diff \vec \mu_0^k \to \int_\Rd \vec \Psi^{T,\infty}[\psi_0]_T \diff \vec \mu_0^\infty.
    \end{equation*}
\end{enumerate}
Then $(\vec \mu^\infty, \vec  \Psi^\infty)$ is an entropy pair of the limit problem.
\end{theorem}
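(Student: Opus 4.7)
The plan is to verify the two conditions of \Cref{defn:dual viscosity} for the limit pair $(\vec\mu^\infty, \vec\Psi^\infty)$: namely, that $\vec\Psi^{T,\infty}$ maps admissible initial data to viscosity solutions of the limit dual problem \eqref{eq:dual}, and that the duality identity \eqref{eq:duality} holds against $\vec\mu^\infty$, with each $\vec\mu^\infty_t \in \mathcal P_1(\Rd)^n$.

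\textbf{Step 1 (viscosity-solution property).} Fix $\psi_0 \in C_c^\infty(\Rd)$ and $i \in \{1,\dots,n\}$. For every $k$, the function $\psi^{T,k,i} \defeq \Psi^{T,k,i}[\psi_0]$ is a viscosity solution of
\[
    \partial_s \psi^{T,k,i} = -\bm{\mathfrak K}^{k,i}[\vec\mu^k]_{T-s} \cdot \nabla\psi^{T,k,i} + \diffc^{k,i} \Delta\psi^{T,k,i}.
\]
Assumption (3) supplies locally uniform convergence $\psi^{T,k,i} \to \psi^{T,\infty,i}$, assumption (4) locally uniform convergence of the velocity fields, and assumption (1) convergence of the diffusivities. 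The Crandall-Ishii-Lions stability theorem for viscosity solutions then yields that $\psi^{T,\infty,i}$ is a viscosity solution of \eqref{eq:dual} with field $\bm{\mathfrak K}^{\infty,i}[\vec\mu^\infty]$. Linearity of $\vec\Psi^{T,\infty}$ is preserved at the pointwise limit. The extension from $C_c^\infty$ data to all $\psi_0$ with $\nabla \psi_0 \in L^\infty$ is obtained by density, using the uniform Lipschitz dependence and weighted estimates for the dual-solution operator established in \Cref{sec:solutions of PE*}.

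\textbf{Step 2 (duality identity and $\mathcal P_1$ bound).} For $\psi_0 \in C_c^\infty(\Rd)$ and each $k$, the duality identity for the $k$-th entropy pair reads
\[
    \int_\Rd \psi_0 \diff \mu^{i,k}_T = \int_\Rd \Psi^{T,k,i}[\psi_0]_T \diff \mu^{i,k}_0.
\]
The right-hand side converges to $\int_\Rd \Psi^{T,\infty,i}[\psi_0]_T \diff \mu^{i,\infty}_0$ directly by assumption (5); the left-hand side converges to $\int_\Rd \psi_0 \diff \mu^{i,\infty}_T$ by assumption (2), since $\psi_0 \in C_0(\Rd)$. This gives \eqref{eq:duality} for compactly supported smooth test functions. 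The uniform-in-$k$ first-moment bounds implied by $\vec{\bm{\mathfrak K}}^k$ mapping bounded sets to bounded sets, combined with lower semicontinuity of the first moment under weak-$\star$ limits, place each $\vec\mu^\infty_t$ in $\mathcal P_1(\Rd)^n$ and provide the tightness that prevents mass escape to infinity. The extension of \eqref{eq:duality} to all Lipschitz initial data then proceeds by the same density argument as in Step 1.

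\textbf{Main obstacle.} The most delicate point is the passage from $\psi_0 \in C_c^\infty(\Rd)$ to general $\psi_0$ with $\nabla \psi_0 \in L^\infty$, since such Lipschitz functions need not be bounded on the unbounded domain $\Rd$. This will rely on the uniform-in-$k$ weighted $L^\infty$ estimate for the quotient $(\psi^{T,k,i}_T - \psi^{T,\infty,i}_T)/(1+|x|)$ of the type entering \Cref{lem:continuous dependence}, together with the uniform first-moment bounds on $\vec\mu^k_0$. The Crandall-Ishii-Lions stability step is standard once these quantitative bounds are at hand, and the weak-$\star$ passage to the limit in $\int \psi_0 \diff \mu^{i,k}_T$ is automatic for $\psi_0 \in C_c^\infty$.
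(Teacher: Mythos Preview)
Your approach is the same as the paper's: invoke the Crandall--Ishii--Lions stability theorem (stated immediately after \Cref{thm:stability} as \Cref{lem:viscosity solutions stability}) to pass to the limit in the viscosity-solution property, and use assumptions (2) and (5) to pass to the limit in the duality identity \eqref{eq:duality}. The paper's own argument is literally one sentence, saying the result ``follows directly from the definition and the following classical result of stability of viscosity solutions.''

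You are more careful than the paper in two respects. First, you explicitly address the passage from $\psi_0 \in C_c^\infty(\Rd)$ (which is all that assumptions (3) and (5) cover) to general Lipschitz $\psi_0$ as required by \Cref{defn:dual viscosity}; the paper treats this as implicit. Second, you verify that $\vec\mu_t^\infty \in \mathcal P_1(\Rd)^n$ via uniform first-moment bounds and lower semicontinuity, which the paper also leaves unstated. These elaborations are correct and appropriate, but they do not constitute a different route---they simply fill in details that the paper takes for granted.
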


The result follows directly from the definition and the following classical result of stability of viscosity solutions (see, e.g., \cite{crandall+ishii+lions1992users-guide-viscosity}).
\begin{theorem}
\label{lem:viscosity solutions stability}
    Let $\diffc^k \ge 0$, $\psi^k \in C([0,T] \times \Rd)$, ${\bm E}^k \in C([0,T]\times \Rd)^d$ be viscosity solutions of
    \begin{equation*}
        \partial_s \psi_s^k = {\bm E}_{T-s}^k \nabla \psi_s^k + \diffc^k \Delta \psi_s^k
    \end{equation*}
    and assume that
     $\diffc^k \to \diffc^\infty$ 
    and $\psi^k \to \psi^\infty$,   ${\bm E}^k \to {\bm E}^\infty$ uniformly over compacts of $[0,T] \times \Rd $.
    Then, $\psi^\infty$ is a viscosity solution of
    \begin{equation*}
        \partial_s \psi_s^\infty = {\bm E}_{T-s}^\infty \nabla \psi_s^\infty + \diffc^\infty \Delta \psi_s^k.
    \end{equation*}
\end{theorem}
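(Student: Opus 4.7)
The plan is to establish only the subsolution property of $\psi^\infty$; the supersolution statement follows by a symmetric argument with inequalities reversed and test functions touching from below. Fix $z_0 = (s_0, x_0) \in (0,T] \times \Rd$, a compact neighbourhood $U$ of $z_0$, and $\varphi \in C^2(U)$ touching $\psi^\infty$ from above at $z_0$. The goal is to derive
\begin{equation*}
    \partial_s\varphi(z_0) \le \bm E^\infty_{T-s_0}(x_0)\cdot \nabla\varphi(z_0) + \diffc^\infty \Delta \varphi(z_0).
\end{equation*}

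The first move is the standard Crandall–Ishii–Lions perturbation: replace $\varphi$ by $\tilde\varphi(z) \defeq \varphi(z) + |z-z_0|^4$, so that $\psi^\infty - \tilde\varphi$ attains a \emph{strict} maximum on $U$ at $z_0$, while $\partial_s \tilde\varphi$, $\nabla \tilde\varphi$ and $\Delta \tilde\varphi$ evaluated at $z_0$ coincide with those of $\varphi$ (the quartic bump contributes zero to derivatives of order $\le 2$ at $z_0$).

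Next I use the uniform convergence $\psi^k \to \psi^\infty$ on $U$. The continuous functions $\psi^k - \tilde\varphi$ attain their maxima on the compact set $U$ at some points $z_k = (s_k, x_k)$, and a standard compactness argument (any cluster point of $z_k$ must be a maximiser of $\psi^\infty - \tilde\varphi$ on $U$, which is unique) forces $z_k \to z_0$. Hence, for $k$ large, $z_k$ lies in the interior of $U$, so $\tilde\varphi + c_k$ with $c_k \defeq \psi^k(z_k) - \tilde\varphi(z_k)$ touches $\psi^k$ from above at $z_k$; the subsolution property of $\psi^k$ then yields
\begin{equation*}
    \partial_s \tilde\varphi(z_k) \le \bm E^k_{T-s_k}(x_k) \cdot \nabla \tilde\varphi(z_k) + \diffc^k \Delta \tilde\varphi(z_k).
\end{equation*}

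Finally I pass to the limit $k\to\infty$. By continuity of the $C^2$ derivatives of $\tilde\varphi$ at $z_0$, each of $\partial_s \tilde\varphi(z_k)$, $\nabla \tilde\varphi(z_k)$, $\Delta \tilde\varphi(z_k)$ converges to the corresponding value at $z_0$; the hypothesis $\bm E^k \to \bm E^\infty$ uniformly on the compact set $\{(T-s_k,x_k)\}_k \cup \{(T-s_0,x_0)\}$ combined with $z_k \to z_0$ forces $\bm E^k_{T-s_k}(x_k) \to \bm E^\infty_{T-s_0}(x_0)$; and $\diffc^k \to \diffc^\infty$ by assumption. I expect the main obstacle to be the perturbation step: the touching point $z_0$ need not be the unique maximiser of $\psi^\infty - \varphi$, so without the quartic bump the maximisers $z_k$ might not localise at $z_0$ and the limit argument would break down; a naive quadratic bump would preserve uniqueness of the maximiser but introduce a spurious $2d\,\diffc^\infty$ term in the Laplacian. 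The quartic $|z-z_0|^4$ is the textbook workaround that preserves both uniqueness and the second-order derivatives at $z_0$, after which the remainder of the argument is routine passage to the limit using uniform convergence on compacta.
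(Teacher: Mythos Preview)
The paper does not actually prove this theorem: it is stated as a classical stability result for viscosity solutions and referred to \cite{crandall+ishii+lions1992users-guide-viscosity} without further argument. Your write-up is precisely the standard Crandall--Ishii--Lions stability proof (strict-maximum perturbation via a quartic bump, localisation of maximisers by uniform convergence on compacts, and passage to the limit in the test inequality), and it is correct; so there is nothing to compare beyond noting that you have supplied the textbook proof that the paper merely cites.
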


\subsection{The limits of the theory: the diffusive Newtonian potential in \texorpdfstring{$d=1$}{d=1}}
\label{sec:Newtonian}

Let us consider the case $n = d = 1$ with $K$ coming from the gradient-flow structure \eqref{eq:gradient flow of convolution} and the Newtonian potential
$W(x) = -|x|$. Then $\nabla W(x) = \sign(x)$. 
This $W$ falls outside the theory developed in \Cref{thm:well-posedness in P1}, but can be approximated by solutions in this framework.
It was proved in \cite{Bonaschi2015}
that for $\mu_0 = \delta_0$ then the gradient flow solution is not given by particles but rather
\begin{equation*}
    \mu_t = \frac{1}{2t} \chi_{[-t,t]}.
\end{equation*}
Naturally, any smooth approximation of $W$ (for example $W_k = -(x^2 + \frac 1 k)^{\frac 1 2}$) yields $\mu^k_t = \delta_0$. 
However, solving with $\mu_0^m = \frac{1}{2m} \chi_{[-m,m]}$ gives the expected reasonable solution. We therefore have the following diagram
\begin{equation*}
    \xymatrix@C=.25em{
    \mu^{k,m} \ar[rrrrr]^{m\to 0} \ar[d]^{k\to \infty} & & & & & \mu^k  & = & \delta_0 \ar[d]^{k\to \infty} \\
    \mu^{m} \ar[rrrrr]^{m\to 0} &&&&& \mu  & \ne  & \delta_0.
    }
\end{equation*}

Clearly, $\mu_t^k \to \delta_0$ in any conceivable topology. Checking whether $\delta_0$ is or is not a distributional solution for $W $ is not totally trivial. Notice that $\sign$ (as a distributional derivative of $|x|$) is not pointwise defined at $0$, so the meaning of $\int \varphi \sign * \delta_0 \diff \delta_0$ is not completely clear. With the choice $\sign(0) = 0$ one could be convinced that $\mu_t = \delta_0$ is a distributional solution. But this seems arbitrary.

In fact, if one takes the approximating entropy pairs $(\mu^k, \Psi^k)$, it is not difficult to show that a discontinuity appears in $\psi_s^{k,T}$ as $k \to \infty$. Hence, we do not have uniform convergence of $\psi^{k,T}$. 
To show this fact, let $E^k = -\partial_x (W_k * \mu^k)$. Since $\mu^k = \delta_0$, we have that 
$$
    E^k _t (x) = -\partial_x W_k = \frac{x}{(x^2 + \frac 1 k)^{\frac 1 2}}.
$$
Since $E^k$ does not depend on $t$, $\psi^{k,T}$ does not depend on $T$. Therefore, we drop the $T$ to simplify the notation. We have to solve the equation
\begin{equation*}
	\partial_t \psi^k_t (y) = \frac{y}{(y^2 + \frac 1 k)^{\frac 1 2}} \partial_y \psi^k_t.
\end{equation*}
This equation can be solved by characteristics $\psi^k_t (Y^k_t(y_0)) = \psi_0 (y_0)$ given by
\begin{equation*}
	\partial_t Y^k_t = - \frac{Y^k_t}{(Y_t^2 + \frac 1 k)^{\frac 1 2}}
\end{equation*}
As $k \to \infty$ we recover 
$
	Y^{k} _t (y_0) \to  Y_t(y_0) = y_0 - \sign(y_0) t
$ or, equivalently, that
\begin{equation*}
	\psi^k_t (y) \to \psi_t (y) = \psi_0 (  y + \sign (y) t   ).
\end{equation*}
Therefore, $\psi_t(0^-) = \psi_0 (-t)$ and $\psi_t(0^+) = \psi_0 (t)$. Thus, $\psi_t$ is not continuous in general. 
So we cannot pass to the limit $\mu^k \to \mu$ in terms of entropy pairs.

The gradient flow solution is recovered by the entropy pairs if one approximates $\delta_0$ by $\mu_0^m = \frac{1}{2m} \chi_{[-m,m]}$ and passes to the limit. It is also not difficult to show that the entropy pairs of twice regularised problem,  $(\mu^{k,m}, \Psi^{k,m})$ do converge in the sense of entropy pairs as $k \to \infty$ to $(\mu^m, \Psi^m)$. And then we can also pass to the limit in the sense of entropy pairs as $m \to \infty$. 

Thus, the stable semigroup solutions are entropy pairs, and this notion of solution is powerful enough to discard the wrong order of the limits. 

\subsection{Open problems}
\label{sec:open problem}
We finally briefly discuss the key difficulties for some related problems that our duality theory cannot cover.

\paragraph{Nonlinear diffusion.} 
We could deal, in general, with problems of the form
\begin{equation}
\label{eq:main nonlinear diffusion}
    \partial_t \mu_t^i = \diver (\mu^i {\bm {\mathfrak K}}^i [\vec \mu] ) + \diver (\vec M^i_t[ \mu^i _t] \nabla \mu_t^i ).
\end{equation}
Going back to \eqref{eq:main nonlinear diffusion}, the dual problem can be written in the form
\begin{equation*}
	\frac{\partial \psi_s}{\partial s}  =  {\bm E}_{T-s}  \nabla \psi_s + \diver ( \vec A_s \nabla \psi_s  ) .
\end{equation*}
We have not been able to estimates of $\| \nabla ( \psi - \overline \psi) \|_{L^\infty}$ of $\| \psi - \overline \psi \|_{H^1}$ with respect to $E - \overline E$ and $\vec A - \overline {\vec A}$ using the techniques below. The main difficulty is that letting $v = \frac{\partial \psi}{\partial x_i}$ and $\overline v = \frac{\partial \overline \psi}{\partial x_i}$, the reasonable extension of \eqref{eq:dual E known eq for v - hat v} contains a term $-\diver ( (\vec A - \overline {\vec A}) \nabla \overline v) $. Controlling these terms requires estimates of $D^2 \psi$, which are not in principle present for the initial data we discuss. However, smarter estimates and choices of space for $\mu$ and $\psi$ may lead to a successful extension of our results.

\paragraph{Coupling in the dual problem.} 
Another possible extension is the analysis of more general systems of the form
\begin{equation}
    \frac{\partial \mu^i}{\partial t} = \sum_{j=1}^n \diver \left( \mu^j {\bm {\mathfrak K}}^{ij} [\vec  \mu]\right) = \sum_{j,k=1}^n \frac{\partial}{\partial x_k} \left( \mu^j {\bm {\mathfrak K}}^{ijk}\right), \qquad i = 1, \cdots, n.
\end{equation}
The dual problem is now not decoupled, and we have to solve it a system
\begin{equation*}
	\frac{\partial }{\partial t} \vec \psi_s^T  =  \nabla \vec \psi_s^T \cdot \vec {\bm {\mathfrak K}}^* [\vec \mu_{T-s}] .
\end{equation*}
This kind of problems does not have a comparison principle in general, for example
\begin{equation*}
\begin{dcases} 
    \partial_t u = \partial_x v, \\
    \partial_t v = \partial_x u
\end{dcases}
\end{equation*}
leads to the wave equation $\partial_{tt} u =\partial_{t} (\partial_x v) = \partial_x \partial_t v = \partial_{xx} u$.
Some examples of equations resembling this structure appear in mathematical biology models, e.g., \cite{S.K.T1979}.

\paragraph{Problems with saturation.}
Some authors have studied the case
${\bm {\mathfrak K}}_i [\vec \mu]  = \frac{\theta(\mu)}{\mu} H_i [\vec \mu] $
where $\theta (0) = \theta(M) = 0$. 
This problem was studied for $n = 1$ and $D = 0$ by \cite{DiFrancesco2019} using a fixed point argument with the estimates by \cite{Karlsen2003}.
In this setting the solutions with initial datum $0 \le \rho_0 \le M$ remain bounded. Therefore, we expect bounded (even continuous) solutions $\rho$. So the natural duality would be $\psi$ integrable. We point the reader to \cite{Endal2018}, where the authors study the duality between $L^\infty$ estimates for entropy solutions of conservation laws and $L^1$ bounds for viscosity solutions of Hamilton-Jacobi equations.

\section{Viscosity solutions of \texorpdfstring{\eqref{eq:dual E known}}{(PE*)}}
\label{sec:solutions of PE*}

The aim of this section is to prove the following existence, uniqueness, regularity and continuous dependence result for \eqref{eq:dual E known} which is one of the key tools in this paper. The usefulness of the estimates obtained is clear going back to \Cref{lem:continuous dependence}.
\begin{proposition}
\label{prop:dual existence Lipschitz}
     Let $D \ge 0$, $\bm E \in C([0,T]; \Lip_0 (\Rd)) $ and $\psi_0 $ be such that $ \nabla \psi_0 \in L^\infty (\Rd) $. Then, there exists a unique viscosity solution of \eqref{eq:dual E known} such that $\frac{ \psi }{ 1 + |x| } \in L^\infty ((0,T)\times\Rd)$. Furthermore, it satisfies the following estimates
    \begin{align}
        \label{eq:dual Linfty estimate}
	\|\psi_s \|_{L^\infty} &\le \| \psi_0 \|_{L^\infty}, \\
	\label{eq:dual grad estimate}
		\vvvert \nabla \psi_s \vvvert_{L^\infty} &\le  \vvvert \nabla \psi_0 \vvvert_{L^\infty}  \exp \left( C { \int_0^T  \left\| \nabla \vec E_{T-s }\right\|_{L^\infty} \diff \sigma  } \right)\\
    \label{eq:dual psi/v bound}
        \left \|\frac{\psi_s}{1 + |x|} \right\|_{L^\infty} &\le C \left \|\frac{\psi_0}{1 + |x|} \right\|_{L^\infty} \exp \left( \diffc T + \int_0^T   \left\| \frac{ \bm E_{T-\sigma} }{1 + |x|} \right\|_{L^\infty}
         \diff \sigma \right), 
    \end{align}
    where $C = C(d)$ depends only on the dimension, 
    and $\vvvert \nabla \psi_s \vvvert_{L^\infty} = \sup_i \| \tfrac{\partial \psi}{\partial x_i}\|_{L^\infty}$. 
    If $\psi_0 \ge 0$ then $\psi \ge 0$.
    Moreover, we obtain the following time-regularity estimate
    \begin{equation}
    	\label{eq:dual time estimate}
    \begin{aligned}
         \left \|\frac{\psi_{s+h} - \psi_s}{1 + |x|} \right\|_{L^\infty} &\le C \Bigg ( h + D^{\frac 1 2} ( h^{\frac 1 2}  +  h^{\frac 3 2})+ \sup_{\sigma \in [0,s]}  \left\| \frac{ \bm E_{T-(\sigma+h)} - \bm E_{T-\sigma}}{1 + |x|} \right\|_{L^\infty} \Bigg) ,
    \end{aligned}
    \end{equation}
    where $ C = C(T,D, \left\| \psi_0 \right\|_{Lip_0} , \sup_\sigma \| \bm E_\sigma  \|_{Lip_0} )$.
    Lastly, given $\widehat \psi_0 = \psi_0$ and $\widehat{\bm E}$ in the same hypotheses above, there exists a corresponding solution of \eqref{eq:dual linear}, denoted by $\widehat \psi$, and we have the continuous dependence estimate
    \begin{equation}
        \label{eq:dual continuous dependence on E - hat E / (1 + |x|)}
        \left \| \frac{\psi_s - \widehat \psi_s}{1 + |x|}  \right\|_{L^\infty} \le C \int_0^T  \left\| \frac{E_{T-\sigma} - \widehat{\bm E}_{T-\sigma}}{1+|x|}  \right\|_{L^\infty} \diff \sigma 
    \end{equation}
    where $C = C\left( d, \diffc,  \|  \psi_0 \|_{Lip_0}, 
    \int_0^T \left\|   E_{T-\sigma}   \right\|_{Lip_0} \right)$ is monotone non-decreasing in each entry. 
\end{proposition}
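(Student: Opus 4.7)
The plan is to establish existence by approximation: first solve the dual equation for smooth compactly supported $\psi_0$ and smooth bounded $\bm E$ via classical parabolic theory when $D>0$, and via the method of characteristics (in viscosity sense) when $D=0$. All the quantitative estimates will then be derived a priori at the level of smooth solutions and transferred to the limit using stability of viscosity solutions (\Cref{lem:viscosity solutions stability}). Uniqueness in the class $\psi/(1+|x|)\in L^\infty$ requires a comparison principle adapted to linearly growing solutions, which we obtain by comparing with auxiliary barriers of the form $\varepsilon e^{Ct}(1+|x|^2)^{1/2}$ and letting $\varepsilon\to 0$; the constant $C$ is chosen so that such a function is a classical supersolution, using only $\|\bm E/(1+|x|)\|_{L^\infty}$ and $D$.

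For the a priori estimates, I would work with classical solutions. The $L^\infty$ bound \eqref{eq:dual Linfty estimate} is the usual maximum principle: since there is no zero-order term, constants are both super- and subsolutions, and $\pm\|\psi_0\|_{L^\infty}$ sandwich $\psi$ (and positivity is preserved since $0$ is a solution). For the gradient estimate \eqref{eq:dual grad estimate}, differentiating the equation with respect to $x_i$ gives, for $v_i := \partial_{x_i}\psi$,
\begin{equation*}
    \partial_s v_i = \bm E_{T-s}\cdot\nabla v_i + (\partial_{x_i}\bm E_{T-s})\cdot\nabla\psi + D\Delta v_i,
\end{equation*}
so that $\partial_s |v_i|\le \bm E\cdot\nabla|v_i| + \|\nabla\bm E\|_{L^\infty}\vvvert\nabla\psi\vvvert_{L^\infty}+ D\Delta|v_i|$ in viscosity sense; taking the supremum over $i$ and applying the maximum principle yields a differential inequality for $\vvvert\nabla\psi_s\vvvert_{L^\infty}$, and Grönwall closes the loop (justifying the dimensional constant $C=C(d)$ coming from coupling of components through $\bm E\cdot\nabla$). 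For the weighted growth bound \eqref{eq:dual psi/v bound}, the natural barrier is
\begin{equation*}
    \Phi(s,x) = A(s)\,(1+|x|^2)^{1/2}, \qquad A(s)=C_0\Bigl\|\tfrac{\psi_0}{1+|x|}\Bigr\|_{L^\infty}\exp\!\Bigl(D s + \int_0^s \|\bm E_{T-\sigma}/(1+|x|)\|_{L^\infty}d\sigma\Bigr),
\end{equation*}
which is a classical supersolution because $|\nabla \Phi|\le A$ and $\Delta\Phi\le C A/(1+|x|^2)^{1/2}\le CA$; comparison with $\pm\Phi$ gives the estimate.

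The time-regularity estimate \eqref{eq:dual time estimate} follows by applying the weighted bound to the difference $w(s,x):=\psi_{s+h}(x)-\psi_s(x)$, which satisfies
\begin{equation*}
    \partial_s w = \bm E_{T-(s+h)}\cdot\nabla w + (\bm E_{T-(s+h)}-\bm E_{T-s})\cdot\nabla\psi_s + D\Delta w,
\end{equation*}
treating the middle term as an inhomogeneity estimated via $\vvvert\nabla\psi\vvvert_{L^\infty}$ from \eqref{eq:dual grad estimate}; the initial datum $w(0,\cdot)=\psi_h-\psi_0$ is controlled by the equation for $\psi$ over a time $h$, which produces the $h$, the $D^{1/2}h^{1/2}$ (from heat-kernel type regularisation) and $D^{1/2}h^{3/2}$ terms. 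Continuous dependence \eqref{eq:dual continuous dependence on E - hat E / (1 + |x|)} is the crucial quantitative estimate: writing $w:=\psi-\widehat\psi$, we get
\begin{equation*}
    \partial_s w = \bm E_{T-s}\cdot\nabla w + (\bm E_{T-s}-\widehat{\bm E}_{T-s})\cdot\nabla\widehat\psi + D\Delta w,
\end{equation*}
with $w(0,\cdot)=0$. Since $\vvvert\nabla\widehat\psi\vvvert_{L^\infty}$ is already controlled by \eqref{eq:dual grad estimate} in terms of $\|\psi_0\|_{\Lip_0}$ and $\int\|\widehat{\bm E}\|_{\Lip_0}$, the forcing term is bounded in the weighted norm by $\vvvert\nabla\widehat\psi\vvvert_{L^\infty}\|(\bm E-\widehat{\bm E})/(1+|x|)\|_{L^\infty}$, and we again use a barrier of the form $B(s)(1+|x|^2)^{1/2}$ to absorb it.

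The main obstacle will be the rigorous justification of these differential inequalities at the viscosity level, since $\nabla\psi$ is not a pointwise object for merely Lipschitz viscosity solutions. I would bypass this by performing every estimate on smooth approximations $(\bm E^{(k)},\psi_0^{(k)})$ where classical solutions exist and all manipulations are legitimate, and then passing to the limit using \Cref{lem:viscosity solutions stability} together with the uniform bounds just derived, which give Arzelà–Ascoli compactness on compact subsets of $[0,T]\times\Rd$ via the time-regularity estimate \eqref{eq:dual time estimate} and the gradient bound \eqref{eq:dual grad estimate}. The limit candidate is then a viscosity solution with all the stated bounds, and uniqueness in the linearly growing class (proved via the $\varepsilon$-barrier argument above) makes the limit independent of the approximation.
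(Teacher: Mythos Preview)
Your proposal is correct and follows the same overall architecture as the paper: derive all a priori estimates on smooth approximations, pass to the limit by Ascoli--Arzel\`a and stability of viscosity solutions, and obtain uniqueness in the linearly growing class by a weighted argument. The tactical implementation, however, differs in two places worth noting. First, for the weighted estimates \eqref{eq:dual psi/v bound}, \eqref{eq:dual continuous dependence on E - hat E / (1 + |x|)} and for uniqueness, the paper does not build barriers of the form $A(s)(1+|x|^2)^{1/2}$ directly; instead it performs the change of variable $v = \psi/\eta$ with $\eta(x)=(1+|x|^2)^{1/2}$ (or $\eta=1+|x|^2$ for uniqueness), obtains a new linear equation \eqref{eq:dual problem E known v} with \emph{bounded} coefficients, and then applies a single general $L^\infty$ lemma (\Cref{lem:GLP L infty estimate}) to that equation. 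This is algebraically equivalent to your barrier approach but more systematic: one lemma covers all the weighted bounds, and uniqueness reduces immediately to the standard bounded-coefficient comparison result \cite{Nunziante1992}. Second, for the gradient bound \eqref{eq:dual grad estimate} the paper does not argue pointwise on $\max_i|\partial_{x_i}\psi|$ as you suggest; it computes $L^p$ energy estimates on the system for $U^i=\partial_{x_i}\psi$ (integrating $|U^i|^{p-2}U^i\,\partial_s U^i$ and summing in $i$) and then sends $p\to\infty$. This sidesteps the awkwardness of your step ``taking the supremum over $i$ and applying the maximum principle'', where the coupling $(\partial_{x_i}\bm E)\cdot\nabla\psi$ mixes components and $\max_i|v_i|$ is not a clean subsolution of a scalar equation. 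Your route can be made rigorous, but the $L^p\to L^\infty$ trick handles the system structure more transparently.
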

We will prove this result in \Cref{sec:proof of prop:dual prop:dual existence Lipschitz}.
The uniqueness of viscosity solutions leads to the following
\begin{corollary}
  Assume that $\vec {\bm {\mathfrak K}}: C([0,T]; \mathcal P_1 (\Rd)) \to \mathcal C([0,T]; \Lipschitz_0(\Rd; \Rd))$, and let $(\mu, \Psi)$ and $(\mu, \widehat \Psi)$ be entropy pairs of \eqref{eq:main}, then $\Psi = \widehat \Psi$.
\end{corollary}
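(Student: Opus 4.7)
The plan is to reduce the corollary to the uniqueness statement inside \Cref{prop:dual existence Lipschitz}. Since the two entropy pairs share the same $\vec\mu$, the velocity fields appearing in the dual systems are identical: namely $\bm E^i_t \defeq -(\bm{\mathfrak K}^i[\vec\mu])_{t}$, which by the standing hypothesis belongs to $C([0,T];\Lip_0(\Rd;\Rd))$. For any fixed $T \ge 0$, $i \in \{1,\dots,n\}$, and any admissible initial datum $\psi_0$ (i.e.\ $\nabla \psi_0 \in L^\infty(\Rd)$), the functions $\Psi^{T,i}[\psi_0]$ and $\widehat\Psi^{T,i}[\psi_0]$ are, by \Cref{defn:dual viscosity}, both viscosity solutions of the \emph{same} Cauchy problem \eqref{eq:dual}, with the \emph{same} initial condition $\psi_0$.

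The second step is to verify that these two viscosity solutions lie in the uniqueness class of \Cref{prop:dual existence Lipschitz}, i.e.\ that $\Psi^{T,i}[\psi_0]/(1+|x|) \in L^\infty((0,T)\times\Rd)$ and similarly for $\widehat \Psi$. Since $\nabla \psi_0 \in L^\infty$ gives $|\psi_0(x)| \le |\psi_0(0)| + \|\nabla \psi_0\|_{L^\infty}|x|$, so $\psi_0/(1+|x|) \in L^\infty$, the natural candidate produced by the proof of \Cref{prop:dual existence Lipschitz} (say, via vanishing-viscosity approximation and the bound \eqref{eq:dual psi/v bound}) automatically has this property. The point to check is that any viscosity solution of \eqref{eq:dual} which is merely continuous must in fact belong to this class; this follows from a standard comparison argument against the explicit super-/sub-solutions used to derive \eqref{eq:dual psi/v bound}, which is built directly on the maximum principle and the $\Lip_0$ bound on $\bm E$. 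Once this is granted, the uniqueness clause in \Cref{prop:dual existence Lipschitz} forces $\Psi^{T,i}[\psi_0] = \widehat\Psi^{T,i}[\psi_0]$.

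Since this identity holds for every $T \ge 0$, every $i$, and every admissible $\psi_0$, we conclude that the linear maps $\Psi^T$ and $\widehat\Psi^T$ agree on their common domain for each $T$, i.e.\ $\vec\Psi = \widehat{\vec\Psi}$.

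The only non-routine point in the plan is the growth control on arbitrary viscosity solutions of \eqref{eq:dual}: \Cref{defn:dual viscosity} only asks that $\Psi^{T,i}[\psi_0] \in C([0,T]\times\Rd)$, whereas the uniqueness in \Cref{prop:dual existence Lipschitz} is stated within the class $\psi/(1+|x|) \in L^\infty$. The cleanest way to close this gap is to notice that the comparison-principle-type bound underlying \eqref{eq:dual psi/v bound} applies to any viscosity sub/super-solution and so constrains the growth of every continuous viscosity solution automatically; alternatively, one can sharpen the definition of entropy pair to build this growth bound into the definition, in which case the corollary is an immediate consequence of the uniqueness in \Cref{prop:dual existence Lipschitz}.
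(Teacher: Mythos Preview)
Your approach is exactly the paper's: the corollary is stated immediately after \Cref{prop:dual existence Lipschitz} as a direct consequence of the uniqueness clause there, with no further argument. So reducing to that uniqueness statement is precisely what the paper does.

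You are, however, more careful than the paper in flagging a genuine subtlety: \Cref{defn:dual viscosity} only asks that $\Psi^{T,i}[\psi_0]\in C([0,T]\times\Rd)$, while the uniqueness in \Cref{prop:dual existence Lipschitz} is stated within the class $\psi/(1+|x|)\in L^\infty$. The paper does not address this gap explicitly. Your second proposed remedy (build the growth condition into the definition of entropy pair) is the clean fix and is almost certainly what the authors intend.

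Your first proposed remedy, on the other hand, does not work as written. You cannot derive an a priori growth bound on an arbitrary continuous viscosity solution by comparison against the barriers behind \eqref{eq:dual psi/v bound}: the comparison principle in $\Rd$ already \emph{requires} a growth restriction to be applicable (recall the Tychonoff-type nonuniqueness for the heat equation, where solutions growing like $e^{c|x|^2}$ escape every polynomial barrier). So the argument ``comparison implies growth bound implies uniqueness'' is circular. The gap must be closed at the level of the definition, not by an a posteriori estimate.
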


There are some immediate consequences that come from the linearity of equation.
\begin{remark}
Notice that the properties above imply a comparison principle. Given two solutions, we have that $\psi_0 \le \widehat \psi_0$, then $\widetilde { \psi}_0 = \widehat \psi_0 - \psi_0 \ge 0$. Then, by linearity, uniqueness, and preservation of positivity $\widehat \psi - \psi = \widetilde { \psi} \ge 0$.
\end{remark}

Notice that the generic initial datum $\Lip(\psi_0) \le 1$, need not be bounded. This can initially seem like a problem for uniqueness, since we cannot prescribe conditions at infinity. However, following Aronson \cite[Theorem 2 and 3]{Aronson1968}, existence and uniqueness are obtained under the assumption that $e^{-\lambda |x|^2} u_t \in L^2 ((0,T) \times \Rd)$ if the coefficients are bounded. We tackle this issue by studying weighted versions of our solutions
$    v_s(x) = \psi_s(x) / \eta(x)
$, which solve the following problem
\begin{equation}
\label{eq:dual problem E known v}
    \partial_s v = v \frac { E_{T-s} \cdot \nabla \eta + \diffc\Delta \eta  }{\eta}  + \nabla v \cdot \frac{E_{T-s} +2 \diffc \nabla \eta}{\eta} + \diffc \Delta v.
\end{equation}
Notice that, if $\eta (x) = (1 +  |x|^2)^{k/2}$ with $k \ge 1$, then $\nabla \eta / \eta \sim |x|^{-1}$ at infinity. If $E$ is Lipschitz, then all the coefficients of the equation above are bounded.
And, if $k > 1$, then $|v_0(x)|\le C(1+|x|)^{1-k} \to 0 $ as $|x|\to\infty$.

\begin{remark}
A similar argument can be adapted to initial data which are weighted with respect to $1 + |x|^p$ for any $p \ge 1$, if this is satisfied by the initial datum. However, this escapes the interest of this work.
\end{remark}

\subsection{A priori estimates in \texorpdfstring{$L^\infty$}{Linfty}}

\subsubsection{General linear problem}

For the dual problem we are interested in the existence and uniqueness of the linear parabolic problem in non-divergence form
\begin{equation}
\label{eq:GLP}
\begin{dcases} 
	\frac{\partial u_s}{\partial s} = f_s +  a_s u_s + \bm b_s \cdot \nabla u_s + \diffc \Delta u_s & \text{for all } s > 0, x \in \Rd \\
	u_s \to 0 & \text{as } |x| \to \infty \text{ for all } s > 0.
\end{dcases} 
\end{equation}
The theory of existence and uniqueness of classical solutions 
dates back to \cite{ladyzhenskaia1968parabolic}. When the coefficients are smooth, the linear problem can be rewritten in divergence form as
\begin{equation}
\label{eq:GLP divergence}
	\partial_s u_s = f_s +  (a_s - \diver \bm b_s) u_s + \diver ( u_s \bm b_s + \diffc \nabla u_s ) .
\end{equation}

We focus on obtaining suitable a priori estimates assuming that the data and solutions are smooth enough, and these estimates pass to the limit to the unique viscosity solution by approximation of the coefficients. 

By studying \eqref{eq:GLP} at the point of maximum/minimum, we formally have that
\begin{equation*}
    \frac{d}{ds} \|u_s\|_{L^\infty} \le  \| f_s \|_{L^\infty} +  \|a_s\|_{L^\infty} \| u_s \|_{L^\infty} .
\end{equation*}
This intuition can be made precise by the following result 
\begin{lemma}
    \label{lem:GLP L infty estimate}
	If $a,f \in C(\Rd)$ and bounded, $\bm b \in C(\Rd)^d$, and 
	$u$ is a classical solution of \eqref{eq:GLP}
    {such that $\sup_{(0,T) \times \partial B_R} |u| \to 0$ as $R \to \infty$ for each $T$}%
    , then
\begin{equation}
\label{eq:linear viscosity universal bound}
   \|u_s\|_{L^\infty} \le \| u_0 \|_{L^\infty} \exp \left( \int_0^s \|a_\sigma \|_{L^\infty} \diff \sigma \right) + \int_0^s \|f_\sigma\|_{L^\infty} \exp \left( \int_\sigma^s \|a_\kappa\|_{L^\infty} \diff \kappa  \right) \diff \sigma .
\end{equation}
\end{lemma}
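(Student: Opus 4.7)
The plan is to compare the solution $u$ with the scalar ODE whose value at time $s$ is precisely the right-hand side of \eqref{eq:linear viscosity universal bound}. Concretely, let $\Phi(s)$ denote that right-hand side. A direct differentiation shows that $\Phi$ is $C^1$ and solves $\Phi'(s) = \|a_s\|_{L^\infty}\,\Phi(s) + \|f_s\|_{L^\infty}$ with $\Phi(0) = \|u_0\|_{L^\infty}$; in particular $\Phi \ge 0$ on $[0,T]$ since $\Phi(0)\ge 0$ and $\Phi' \ge 0$ whenever $\Phi \ge 0$.

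Next, I would set $v(s,x) \defeq \Phi(s) - u_s(x)$ and rewrite $a_s u_s = a_s(\Phi - v)$ to obtain the linear parabolic inequality
\begin{equation*}
\partial_s v - \diffc \Delta v - \bm b_s \cdot \nabla v - a_s v \;=\; (\|a_s\|_{L^\infty} - a_s)\,\Phi(s) \;+\; (\|f_s\|_{L^\infty} - f_s) \;\ge\; 0,
\end{equation*}
where non-negativity of the right-hand side uses $\Phi \ge 0$ together with the trivial pointwise bounds $a_s(x) \le \|a_s\|_{L^\infty}$ and $f_s(x) \le \|f_s\|_{L^\infty}$. Moreover, $v(0,\cdot) \ge 0$ by definition of the $L^\infty$ norm of $u_0$, and $v(s,x) \to \Phi(s) \ge 0$ as $|x| \to \infty$ thanks to the decay condition imposed in \eqref{eq:GLP}.

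The remaining task is to conclude $v \ge 0$ on $[0,T] \times \Rd$ from this differential inequality together with the initial and ``boundary at infinity'' inequalities. I would use the standard exponential shift $\widetilde v \defeq v\, e^{-\lambda s}$ with $\lambda \ge \sup_{s \in [0,T]} \|a_s\|_{L^\infty}$, which turns the equation into one with non-positive zero-order coefficient, and then apply the weak parabolic maximum principle on cylinders $[0,T] \times B_R$, noting that $\bm b$ is automatically bounded on each such ball by continuity. Letting $R \to \infty$ and using the decay of $u$ at infinity to absorb the values on $\partial B_R$ yields $v \ge 0$ everywhere, hence $u \le \Phi$. Applying the same argument to $-u$, which solves the equation with source $-f$ and the same $a_s$, $\bm b_s$, $\diffc$, gives $-u \le \Phi$, so $|u| \le \Phi$, which is exactly \eqref{eq:linear viscosity universal bound}.

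The only mild obstacle is the application of the maximum principle on the unbounded domain $\Rd$. Since the coefficients are only assumed continuous rather than globally bounded, one cannot invoke a maximum principle on $\Rd$ directly; the localization-plus-exponential-shift argument described above handles this without any additional machinery beyond classical parabolic comparison on bounded cylinders, which is compatible with the generality of the hypotheses in the statement.
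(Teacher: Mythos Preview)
Your approach is correct and rests on the same core idea as the paper: compare $u$ (and $-u$) with the spatially constant supersolution $\Phi(s)$ given by the right-hand side of \eqref{eq:linear viscosity universal bound}. The execution differs only in the device used to force the comparison through: the paper adds a small $\varepsilon>0$ to $\Phi$, considers the first time the inequality $|u_s|\le\Phi(s)+\varepsilon$ fails, locates a touching point using the decay $u_s(x)\to 0$, and derives a contradiction via the \emph{strong} maximum principle; you instead kill the zero-order term by the exponential shift $\widetilde v = v\,e^{-\lambda s}$ and apply the \emph{weak} maximum principle on finite cylinders $[0,T]\times B_R$, letting $R\to\infty$.

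Both routes are standard and equally elementary. One small point: your lateral-boundary step (``absorb the values on $\partial B_R$'') implicitly needs either $\Phi(s)>0$ or some uniformity in $s$ of the decay $u_s(x)\to 0$; this is handled most cleanly by inserting the same $\varepsilon$-slack the paper uses, after which your cylinder argument goes through without further comment.
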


\begin{proof}
    
	Define
	$$ 		\overline u_s (x) = \| u_0 \|_{L^\infty} \exp \left( \int_0^s \|a_\sigma \|_{L^\infty} \diff \sigma \right) + \int_0^s \|f_\sigma\|_{L^\infty} \exp \left( \int_\sigma^s \|a_\kappa\|_{L^\infty} \diff \kappa  \right) \diff \sigma .$$ 
	This is a classical super-solution of \eqref{eq:GLP} and 
    $u_0 \le \overline u_0$. 
    Using the weak maximum principle on a ball $B_R$ and the initial condition we get
    \[ 
        \min_{(0,T) \times B_R} (\overline u - u) = \min_{\substack{ \{0\} \times B_R \\ \cup (0,T) \times \partial B_R}} (\overline u - u) \ge \min \left\{ 0, \min_{(0,T) \times \partial B_R} ( \overline u - u) \right\} .
    \] 
    As $R \to \infty$ this tends to $0$ by hypothesis. 
    We conclude that $u \le \overline u$.
    Similarly, we prove that $-u \le \overline u$.
\end{proof}

Now we compute estimates on the solution formally assuming that $\psi \in C ([0,T] ; C_0^1( \Rd) )$,
and that $\bm E$ is smooth and satisfies all necessary bounds.
Later, we will justify this formal computations.

\subsubsection{\texorpdfstring{$L^\infty$ estimates of $\psi^T$, $\nabla \psi^T$ and $\psi^T/(1+|x|)$}{Linfty estimates of psiT}}
Going to \eqref{eq:dual E known} 
{%
when we have suitable decay of $\psi$ we deduce \eqref{eq:dual Linfty estimate} by \Cref{lem:GLP L infty estimate}.
}%
When we define
$
	U^i_s = \frac{\partial \psi_s^T} {\partial x_i } ,
$
we recover that
\begin{align*}
	\frac{\partial U^i_s}{\partial s} 
	&=  \nabla \psi_s \cdot  \frac {\partial \bm E_{T-s}}{\partial x_i} + \nabla U_s^i \cdot \bm E_{T-s}  + \diffc \Delta U_s^i  \\
	&= \sum_{j=1}^d U^j_s \frac {\partial E^j_{T-s}}{\partial x_i} + \nabla U_s^i \cdot \bm E_{T-s}  + \diffc \Delta U_s^i  .
\end{align*}
Since this is a system, we cannot directly apply  \Cref{lem:GLP L infty estimate}. Nevertheless, we compute
\begin{align*}
	\frac 1 p \frac{\diff}{\diff s} \int_\Rd |U_s^i|^p 
	&= 
		\int_\Rd |U_s^i|^{p-2} U_s^i  \left(  \sum_{j=1}^d U^j_s \frac {\partial E^j_{T-s}}{\partial x_i} + \nabla U_s^i \cdot \bm E_{T-s}  + \diffc \Delta U   \right)  \\ 
	&= 	\sum_{j=1}^d  \int |U_s^i|^{p-2} U_s^i   U^j_s \frac {\partial E^j_{T-s}}{\partial x_i}  
	+ \frac 1 p \int \nabla |U_s^i|^p \cdot \bm E_{T-s}  
	+  \diffc \int |U_s^i|^{p-2} U_s^i \Delta U_s^i \\ 
	&= 	\| U_s^i \|_{L^p}^{p-1} \sum_{j=1}^d     \| U_s^j \|_{L^p} \left \|\frac {\partial E^j_{T-s}}{\partial x_i}  \right\|_{L^\infty }
		- \frac 1 p \int_\Rd  |U_s^i|^p \diver  \bm E_{T-s}  
	\\
	&\qquad 	-  (p-1) \diffc \int_\Rd |U_s^i|^{p-1} |\nabla U_s^i|^2 \\
	&\le 	\| U_s^i \|_{L^p}^{p-1} \sum_{j=1}^d     \| U_s^j \|_{L^p} \left \|\frac {\partial E^j_{T-s}}{\partial x_i}  \right\|_{L^\infty } +  \frac 1 p   \| U_s^i \|_{L^p}^p \left \| \diver  \bm E_{T-s}  \right\|_{L^\infty}
\end{align*}
Using the norm equivalence of norms of $\Rd$, we have
\begin{equation*}
	\frac{\diff}{\diff s} \sum_{i=1}^d \| U_s^i \|_{L^p}^p 
	\le \left(  
		\sum_{i=1}^d  \| U_s^i \|_{L^p}^p \right) \left(p C(d)  \sup_i \sum_{j=1}^d \left \|\frac {\partial E^j_{T-s}}{\partial x_i}  \right\|_{L^\infty }  
		+ \left \| \diver  \bm E_{T-s}  \right\|_{L^\infty} 
	\right) 
\end{equation*}
Eventually, we have that
\begin{equation}
	\label{eq:psi derivative Lp estimate}
	\left( \sum_{i=1}^d \| U^i_s \|_{L^p}^p \right) ^{\frac 1  p} 
		 \le 
		 \left(  \sum_{i=1}^d  \| U^i _0\|_{L^p}^p \right) ^{\frac 1 p}
		 \exp \left(
		 	C(d)  \int_0^s \sup_i \sum_{j=1}^d \left \|\frac {\partial E^j_{T-\sigma}}{\partial x_i}  \right\|_{L^\infty }  \diff \sigma  
		 	+ \frac 1 p \int_0^s \left \| \diver  \bm E_{T-\sigma }  \right\|_{L^\infty}  \diff \sigma 
		 \right) .
\end{equation}
As $p \to \infty$, we obtain \eqref{eq:dual grad estimate}.
Take $\eta(x) = (1 + |x|^2)^{\frac 1 2}$. Recalling \eqref{eq:dual problem E known v} and applying \Cref{lem:GLP L infty estimate}, we deduce
\begin{align*}
    \left \|\frac{\psi_T^T}{1 + |x|} \right\|_{L^\infty} \le C \left \|\frac{\psi_0}{1 + |x|} \right\|_{L^\infty} \exp \left(  \diffc \| \Delta \eta \|_{L^\infty} T + \int_0^T  \left\| \frac {\bm E_{T-\sigma}} {\eta} \right\|_{L^\infty}  
     \diff \sigma \right) .
\end{align*}
Since $\eta$ is regular we eventually deduce \eqref{eq:dual psi/v bound} where $C$ is such that $C^{-1} \le \frac{\eta(x)}{1+|x|} \le C$, and it does not depend even on the dimension.
For the continuous dependence, we write (dropping $s$ and $T-s$ from the subindex for convenience)
\begin{equation}
\label{eq:dual E known eq for v - hat v}
\begin{aligned}
    \partial_s (v - \widehat v) =& \,v \frac{ \bm E - \widehat {\bm E}}{\eta} \cdot \nabla \eta   + \nabla v \cdot \frac{\bm E - \widehat {\bm E} }{\eta}  \\
    & + ({v - \widehat v}) \frac{  \widehat v \cdot \nabla \eta + \diffc\Delta \eta  } {\eta}   + \nabla (v - \widehat v) \cdot \frac{\widehat{\bm E} +2 \diffc \nabla \eta}{\eta} + \diffc \Delta (v - \widehat v) .
\end{aligned} 
\end{equation}
Then we have
\begin{align*}
    |a_s| &= \left| \frac{ \widehat v \cdot \nabla \eta }{\eta}  \right| \le C(1 + |\widehat v|) 
\end{align*}
and
\begin{align*}
    |f_s| &= \left|\frac{v}{\eta} \left( ( \bm E - \widehat{\bm E}) \cdot \nabla \eta  \right)  + \nabla v \cdot \frac{\bm E - \widehat{\bm E} }{\eta} \right| 
    \le \left| \frac{ E- \widehat{\bm E}} \eta \right | (|v| + |\nabla v| ).
\end{align*}
Notice that
\begin{equation*}
    \nabla v = \nabla \frac \varphi {\eta} = \frac{1}{\eta^2} \left( \eta \nabla \psi - \psi \nabla \eta \right) = \frac{1}{\eta} \nabla \psi + \frac{v}{\eta} \nabla \eta.
\end{equation*}
Hence, we can deduce using \Cref{lem:GLP L infty estimate} that
\begin{equation}
\label{eq:dual E known v continuous dependence general}
    \| v_s - \widehat v_s \|_{L^\infty (\Rd)} \le C_1 \| v_0 - \widehat v_0 \|_{L^\infty} + C_2 \int_0^T  \left\| \frac{\bm E_{T-\sigma} - \widehat{\bm E}_{T-\sigma}}{1+|x|}  \right\|_{L^\infty} \diff \sigma
\end{equation}
where
\begin{equation}
    \label{eq:dual continuous dependence on E - hat E / (1 + |x|) b}
    \begin{aligned}
         C_1 = C\left(T,D,\left\| \psi / (1 + |x|)\right\|_{L^\infty}, \| \bm E \|_{L^\infty} \right ), \qquad
    C_2 &= \sup_{s \in [0,T] } \left(  \| v_s \|_{L^\infty} \| \nabla \eta \|_{L^\infty} + \| \nabla v_s \|_{L^\infty} \right), 
    \end{aligned} 
\end{equation}
which can be estimated using \eqref{eq:dual grad estimate} and \eqref{eq:dual psi/v bound}.

Eventually, since $\psi_0^T - \widehat \psi_0^T = 0$, we recover \eqref{eq:dual continuous dependence on E - hat E / (1 + |x|)}.
Notice that $C_1$ cannot be uniformly bounded over the set $\Lip(\psi_0) \le 1$, where we can bound $C_2$.
Here is where the assumptions  
that ${\bm {\mathfrak K}}_i$ satisfies \eqref{hyp:P1 domain} and \eqref{hyp:P1 Lipschitz} will come into play in \Cref{sec:proof of main theorem}.

\subsubsection{Time continuity}
Taking $v_s = \psi_{s} / \eta^{(1)}$ and $\widehat v_s = \psi_{s+h} / \eta^{(1)}$ where $\eta^{(1)} = (1 + |x|^2)^{\frac 1 2}$, we similarly deduce going back to \eqref{eq:dual E known v continuous dependence general} that
\begin{equation}
\label{eq:dual E known time continuity 1}
\begin{aligned}
     \left\| \frac{\psi_{s+h}- \psi_s}{1 + |x|} \right\|_{L^\infty} \le   C_1  \left\| \frac{\psi_{h}- \psi_0}{1 + |x|} \right\|_{L^\infty} + C_2  \sup_{\sigma \in [0,T]} \left\| \frac{\bm E_{T-(\sigma+h)} - \bm E_{T-\sigma}}{1 + |x|} \right\|_{L^\infty}  
\end{aligned} 
\end{equation}
Therefore, $\psi$ inherits the time continuity of $E$.
Then the only remaining difficulty is the time continuity at $0$. 

For $D > 0$ we use Duhamel's formula for the heat equation $u_t - D \Delta u = f$, where we denote the heat kernel $K_D$. Notice that $K_D(t,z) = K_1 (Dt,z) $. For the first term we have that
\begin{align*}
    \frac{\psi_s(x) - \psi_0(x)}{1+|x|}
    =&\, \frac{1}{1+|x|} \Bigg(  \int_\Rd {K_D(s,x-y) } {\psi_0(y)} \diff y - \psi_0(x) \\
   &\qquad\qquad
   +  \int_0^s \int_\Rd K_D(s - \sigma, x-y)  {E_{T-\sigma}(y)} \cdot \nabla \psi_\sigma (y) \diff y \diff \sigma   \Bigg) \\
    =&\, \int_\Rd {K_D(t,x-y) } \frac {\psi_0(y) - \psi_0(x) }{1+|x|} \diff y
    \\ 
    & +  \int_0^s \int_\Rd K_D(s - \sigma, x-y) \frac{1+|y|}{1+|x|} \frac{E_{T-\sigma}(y)}{1+|y|} \cdot \nabla \psi_\sigma (y) \diff y \diff \sigma \,.
\end{align*}
We estimate as follows
\begin{align*}
    \left| \int_\Rd {K_D(s,x-y) } \frac {\psi_0(y) - \psi_0(x) }{1+|x|} \diff y \right|& \le \| \nabla \psi_0 \|_{L^\infty(\Rd)} \int_\Rd K_D(s,x-y)|x-y| \diff y \\
    &=  \| \nabla \psi_0 \|_{L^\infty(\Rd)}  \int_\Rd |z| \frac{e^{-\frac{|z|^2}{4Ds}}}{(4\pi Ds)^{\frac d 2}}  \diff z \\ 
    &= C \| \nabla \psi_0 \|_{L^\infty(\Rd)} (4Ds)^{\frac 1 2} \int_\Rd |z| \frac{e^{-|z|^2}}{\pi^{\frac d 2}}  \diff z.
\end{align*}
Now since $E(y)/(1+|y|)$ and $\nabla \psi$ are bounded, it leaves to integrate
\begin{align*}
    \int_0^s \int_\Rd K_D(\sigma, x-y) \frac{1+|y|}{1+|x|} \diff y \diff \sigma 
    &= \int_0^s \int_\Rd \frac{1}{(4\pi D \sigma)^{\frac d 2}} e^{-\frac{|z|^2}{4 D \sigma }} \frac{1+|x|+|z|}{1+|x|} \diff z \diff \sigma\\
    &= \int_0^s \int_\Rd \frac{1}{(4\pi D \sigma)^{\frac d 2}} e^{-\frac{|z|^2}{4 D \sigma }} (1 + |z|) \diff z \diff \sigma\\
    &= \int_0^s \int_\Rd \frac{1}{(4\pi D \sigma)^{\frac d 2}} e^{-\frac{|z|^2}{4 D \sigma }}  \diff z \diff \sigma + \int_0^s \int_\Rd \frac{1}{(4\pi D \sigma)^{\frac d 2}} e^{-\frac{|z|^2}{4 \sigma }}  |z| \diff z \diff \sigma\\
    &= \int_0^s \int_\Rd \frac{1}{\pi^{\frac d 2}} e^{-{|w|^2}}  \diff w \diff \sigma +  \int_0^s \int_\Rd (2D\sigma)^{\frac 1 2} \frac{1}{\pi^{\frac d 2}} e^{-|w|^2} |w| \diff w \diff \sigma\\
    &\le s + C D^{\frac 1 2} s^{\frac 3 2}.
\end{align*}
Eventually, we recover
{%
using \eqref{eq:dual grad estimate} that
}
\begin{equation}
\label{eq:dual E known time continuity 2}
    \left\| \frac{\psi_s(x) - \psi_0(x)}{1+|x|} \right\|_{L^\infty} \le C(d) \left ( (Ds)^{\frac 1 2}  + (s + D^{\frac 1 2} s^{\frac 3 2}) \sup_{[0,s]} \left \| \frac{ \bm E_{T-\sigma} }{1+|x|} \right\|_{L^\infty}  \right) \| \nabla \psi_0 \|_{L^\infty} 
\end{equation}
This result can also be deduced for $D = 0$ without involving any convolution. Also, it is recovered as a limit $D \searrow 0$.
Joining \eqref{eq:dual E known time continuity 1} and \eqref{eq:dual E known time continuity 2} we recover \eqref{eq:dual time estimate}.

\begin{remark}[Time continuity at $s=0$ if $\Delta \psi_0$ is bounded or $D = 0$]
To estimate the time derivative at time $0$, we consider the candidate sub and super-solutions
\begin{equation*}
    \underline \psi_s = \psi_{0} - C_0 s, \qquad \overline \psi_s = \psi_{0} + C_0 s.
\end{equation*}
We have that
\begin{align*}
    &\partial_s \underline \psi_s - \bm E_{T-s} \cdot  \nabla \underline \psi_{s} - \diffc \Delta \underline \psi_s = - C_0 - \bm E_{T-s} \cdot  \nabla \psi_{0} - \diffc \Delta \psi_0, \\
    &\partial_s \overline \psi_s - \bm E_{T-s} \cdot  \nabla \overline \psi_{s} - \diffc \Delta \overline \psi_s = C_0 - \bm E_{T-s} \cdot  \nabla \psi_{0} - \diffc \Delta \psi_0\,.
\end{align*}
So we need
\begin{equation*}
    C_0 \ge \sup_{[0,T] \times \Rd} |\bm E_{T-s} \cdot  \nabla \psi_{0} + \diffc \Delta \psi_0|.
\end{equation*}
Then
\begin{equation*}
    \left\| \frac{\psi_s - \psi_0}{s}  \right\|_{L^\infty (\Rd)} \le \| \bm E_{T-s} \cdot  \nabla \psi_{0} + \diffc \Delta \psi_0 \|_{L^\infty([0,T] \times \Rd)} .
\end{equation*}
As $s \to 0$, we get an estimate of the time derivative at $s = 0$. 
A similar computation can be done for $\psi / (1 + |x|)$.
This kind of result is useful for the case $D = 0$, since even as $D_k \searrow 0$, one can take approximate initial data $\psi_0^{(k)}$ so that the constant is uniformly bounded.
\end{remark}

\subsection{Proof of \Cref{prop:dual existence Lipschitz}}

\label{sec:proof of prop:dual prop:dual existence Lipschitz}

{%
First we prove uniqueness.
}%
Take $v^{(2)} = \psi / \eta^{(2)}$ where $\eta^{(k)}(x) = (1 + |x|^2)^{k/2}$. We observe that if $\frac{\psi_s}{1+|x|} \in L^\infty$ then $ v_0^{(2)} \in L^\infty(\Rd)$, with decay $1/|x|$ at infinity. 
By dividing viscosity test functions by $\eta^{(2)}$, we observe that $v^{(2)}$ is a viscosity solution of \eqref{eq:dual problem E known v}. 
Notice since $v^{(2)}$ remains bounded, $\sup_{(0,T) \times \mathbb R^d \setminus B_R } \psi / (1 + |x|)$ is bounded by $R^{-\frac 1 2}$.
The uniqueness of $\psi$ follows from the uniqueness of $v^{(2)}$ (see, e.g.,  \cite[Theorem 3.1]{Nunziante1992}). 

{%
Let us now construct the solution and prove its properties.
}%
If $\diffc > 0$, $\psi_0 \in W_c^{2,\infty} (\Rd)$, and $\bm E$ is very regular, then existence is simple by standard arguments.
{%
For the problem in $\Rd$,
regularity 
and decays as $|x| \to \infty$ for compactly supported initial data
}%
follow as in \cite{Evans1998} and hence all the estimates above are justified.
{%
If $\psi_0 \ge 0$ we can proceed like in \Cref{lem:GLP L infty estimate} to prove $\psi \ge 0$.
}

Now we consider the general setting, and we argue by approximation. Consider an approximating sequence $0 < D^{(k)} \to  D$, satisfying
\begin{align*}
    W_c^{2,\infty} (\Rd) \ni \frac{\psi_0^{(k)}}{(1+|x|^2)^{\frac 1 2}} &\longrightarrow   \frac{\psi_0}{(1+|x|^2)^{\frac 1 2}} \quad  \text{ in } L^\infty (\Rd), \\[3mm]
    W^{2,\infty} ([0,T] \times \Rd )^d \ni \bm E^{(k)} &\longrightarrow   \bm E \qquad \qquad  \text{ in } C ((0,T) \times \Rd)^d.
\end{align*}

We have the uniform continuity estimates \eqref{eq:dual grad estimate} and \eqref{eq:dual time estimate}. Due to the Ascoli-Arzelá theorem and a diagonal argument, $\psi^{(k)} \to \psi$ uniformly over compacts of $[0,T] \times \Rd$. 
Applying \Cref{lem:viscosity solutions stability}, the limit is a viscosity solution with the limit coefficients. 
By the uniqueness above this is the solution we are studying.
All estimates are stable by convergence uniformly over compact sets.
\qed

\section{Dual-viscosity solutions of problem \texorpdfstring{\eqref{eq:main E known}}{(PE)}}
\label{sec:main E known}

Now we focus on showing well-posedness and estimates for \eqref{eq:main E known}. We construct the solutions as the duals of those in \Cref{sec:solutions of PE*}.

\begin{proposition}
\label{prop:main E known continuous dependence}
    For every $\diffc, T \ge 0$ and $\bm E \in C([0,T] ; \Lip_0 (\Rd, \Rd)$, and $\mu_0 \in \mathcal P_1 (\Rd)$ there exists exactly one dual viscosity solution $\mu \in C([0,T]; \mathcal P_1 (\Rd))$ of \eqref{eq:main E known}.
    It is also a distributional solution.
    Furthermore, the map 
    \begin{align*}
        S_T :  (\mu_0, \bm E)  \in  \mathcal P_1 (\Rd) \times C([0,T] ; \Lipschitz_0(\Rd, \Rd))  &\longmapsto \mu_T \in \mathcal P_1 (\Rd)
    \end{align*}
    is continuous with the following estimate
    \begin{equation}
        \begin{aligned}
            \label{eq:ST P1 weigth 1 + |x|}
             d_1 (S_T[\mu_0, \bm E], S_T[\widehat \mu_0, \widehat {\bm E}]) 
             &\le C\left( d_1 (\mu_0, \widehat \mu_0) + \int_0^T  \left\| \frac{\bm E_{\sigma} - \widehat {\bm E}_{\sigma}}{1+|x|}  \right\|_{L^\infty} \diff \sigma \right)\\
             \text{where }C &= C\left( d, \diffc,
            \int_0^T \left\|  \nabla {\bm E}_{\sigma}   \right\|_{L^\infty} \diff \sigma, \int_\Rd |x| \diff \mu_0 \right)
        \end{aligned}
    \end{equation}
    depends monotonically on each entry.
    The semigroup property holds in the sense that
    $$
        S_{\widehat t + t} [\mu_0, {\bm E}] = S_{\widehat t} \Bigg[S_{ t} \Big[\mu_0, {\bm E}|_{[0, t]}\Big], {\bm E}|_{[t, t+\widehat t]} (\cdot-t) \Bigg] .
    $$
    Therefore, we have constructed a continuous flow
    \begin{equation*}
            S : (\mu_0, {\bm E})  \in \mathcal P_1 (\Rd) \times C([0,\infty) ; \Lipschitz_0(\Rd, \Rd))  \longmapsto \mu \in C([0,\infty);\mathcal P_1 (\Rd)) .
    \end{equation*}
\end{proposition}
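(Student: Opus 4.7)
The plan is to build the dual viscosity solution by regularization, so that on the approximating level classical parabolic theory applies and the duality identity \eqref{eq:duality E known} is verified by a direct integration by parts, and then to close the argument by passing to the limit using the estimates from \Cref{prop:dual existence Lipschitz}. Concretely, I would pick smooth approximations $\bm E^{(k)}\to \bm E$ in $C([0,T];\Lip_0(\Rd;\Rd))$ together with smooth, compactly supported probability densities $\mu_0^{(k)}\to \mu_0$ in $d_1$. For $\diffc>0$ classical parabolic theory, and for $\diffc=0$ the method of characteristics (giving $\mu^{(k)}_t=(X_t^{(k)})_\#\mu_0^{(k)}$ for the flow $X^{(k)}$ of $-\bm E^{(k)}$), produces a unique smooth probability-valued $\mu^{(k)}$ solving \eqref{eq:main E known}. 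The identity $\frac{d}{dt}\int \psi^{T,(k)}_{T-t}\,d\mu^{(k)}_t=0$, obtained by combining the two strong-form PDEs and integrating by parts, yields the duality identity at this approximation level.

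To pass to the limit I would apply \Cref{lem:continuous dependence} between $\mu^{(k)}$ and $\mu^{(j)}$ and substitute \eqref{eq:dual grad estimate} together with \eqref{eq:dual continuous dependence on E - hat E / (1 + |x|)}; this reproduces the estimate \eqref{eq:ST P1 weigth 1 + |x|} on the smooth level and shows $\{\mu^{(k)}\}$ is Cauchy in $C([0,T];\mathcal P_1(\Rd))$. I denote its limit by $\mu$. For each fixed $\psi_0$ with $\nabla\psi_0\in L^\infty$, \Cref{lem:viscosity solutions stability} combined with the uniqueness in \Cref{prop:dual existence Lipschitz} identifies the locally uniform limit of $\psi^{T,(k)}$ as the viscosity solution $\psi^T$ associated with $\bm E$, and the weighted bound \eqref{eq:dual psi/v bound} gives uniform control of $\psi^{T,(k)}_T/(1+|x|)$, which is exactly what is needed to take the limit in $\int \psi^{T,(k)}_T\,d\mu_0^{(k)}$ against the $d_1$-converging $\mu_0^{(k)}$. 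This yields the duality identity in the limit, so $(\mu,\Psi)$ is an entropy pair; time continuity in $\mathcal P_1$ is inherited from \eqref{eq:dual time estimate}.

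Uniqueness is immediate from the framework: if $\mu$ and $\widehat\mu$ are two dual viscosity solutions with the same $\bm E$ and $\mu_0$, the dual $\psi^T$ is common by \Cref{prop:dual existence Lipschitz}, so the duality identity forces $\int \psi_0\,d(\mu_T-\widehat\mu_T)=0$ for every $\psi_0$ with $\nabla\psi_0\in L^\infty$, and the characterization \eqref{eq:duality characterisation of d1} gives $\mu_T=\widehat\mu_T$. The semigroup identity then follows by applying uniqueness to the problem shifted to $[t,t+\widehat t]$ with initial datum $\mu_t$ and velocity $\bm E|_{[t,t+\widehat t]}(\cdot-t)$. That $\mu$ is also a distributional solution is obtained by passing to the limit in the classical distributional formulation for $\mu^{(k)}$: for $\varphi\in C_c^\infty([0,T)\times\Rd)$, all terms converge thanks to the $d_1$ convergence of $\mu^{(k)}$ and the local uniform convergence of $\bm E^{(k)}$.

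The delicate point is the uniform first-moment control that keeps the iteration inside $\mathcal P_1(\Rd)$ and makes \Cref{lem:continuous dependence} applicable. This is what the weighted estimate \eqref{eq:dual psi/v bound} provides: testing the duality identity against $\psi_0(x)=(1+|x|^2)^{1/2}$, whose gradient lies in $L^\infty$, bounds $\int(1+|x|)\,d\mu^{(k)}_t$ only in terms of $\diffc$, $\int(1+|x|)\,d\mu_0$, and the $\Lip_0$-norm of $\bm E$, uniformly in $k$ and $t\in[0,T]$. Without this, neither the Cauchy property in $d_1$ nor the continuous dependence estimate \eqref{eq:ST P1 weigth 1 + |x|} could be closed, since the natural pointwise bound on $\psi^T$ grows linearly and $d_1$ is not an $L^\infty$-type norm.
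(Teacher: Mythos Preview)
Your proposal is correct and follows essentially the same approach as the paper: regularize, verify the duality identity on the smooth level, use \Cref{lem:continuous dependence} together with \eqref{eq:dual grad estimate} and \eqref{eq:dual continuous dependence on E - hat E / (1 + |x|)} to obtain \eqref{eq:ST P1 weigth 1 + |x|} and the Cauchy property, and pass to the limit via stability of the dual problem.

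There are only two minor organizational differences worth noting. First, the paper establishes the continuous dependence estimate \eqref{eq:ST P1 weigth 1 + |x|} directly for any two dual viscosity solutions \emph{before} constructing them, so uniqueness comes first and the Cauchy property of the approximating sequence is then a corollary; you do the opposite order, which is equally valid. Second, for $\diffc=0$ the paper does not treat the approximation by characteristics but instead always takes $\diffc^{(k)}>0$ with $\diffc^{(k)}\to\diffc$, relying on the stability result \Cref{lem:viscosity solutions stability} to handle the vanishing-viscosity limit in the dual problem; your route via the push-forward flow is more direct and avoids this extra limit. Your explicit use of $\psi_0=(1+|x|^2)^{1/2}$ with \eqref{eq:dual psi/v bound} to propagate the first moment is a point the paper leaves implicit in the dependence of the constant $C$ on $\int |x|\,\diff\mu_0$, and it is a useful clarification.
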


\begin{proof}[Proof of \Cref{prop:main E known continuous dependence}]

We begin by proving uniqueness and continuous dependence. For $D \ge 0$ and any two weak dual viscosity solutions $\mu$ and $\widehat \mu$ corresponding to $(\mu_0, {\bm E})$ and $(\widehat \mu_0, \widehat {\bm E})$ we have, applying \Cref{lem:continuous dependence}, \eqref{eq:dual psi/v bound}, and
\eqref{eq:dual continuous dependence on E - hat E / (1 + |x|)}, that
\begin{equation*}
\begin{aligned}
    d_1 (\mu_T, \widehat \mu_T) 
    &\le 
    	 d_1(\mu_0 , \widehat \mu_0) \sup_{\Lip(\psi_0) \le 1} \Lip(\psi_T^T) 
    	+ \left( 1 + \int_\Rd |x| \diff \widehat \mu_0(x) \right)  
    		 \sup_{ \substack{ \Lip(\psi_0) \le 1 \\ \psi_0(0) = 0  }} 
    		 \left\|  \frac{\psi_T^T - \widehat \psi_T^T}{{1+|x|}} \right \|_{L^\infty} \\
    &\le 
    	C(d) d_1(\mu_0 , \widehat \mu_0) 
    	\exp \left( 
    		\int_0^T \left( \diffc +  \left\| \nabla {{\bm E} _\sigma } \right\|_{L^\infty}  
   		\right) 
   		\diff \sigma \right)\\
    &\quad 
    	+
    	C\left( d, \diffc, \int_0^T \left\|  \nabla {\bm E}_{T-\sigma}   \right\|_{L^\infty} \right)
    	\left( 
    		1 + \int_\Rd |x| \diff \widehat \mu_0(x) 
    	\right)    
    	\int_0^T 
    		 \left\| \frac{{\bm E}_{T-\sigma} - \widehat {\bm E}_{T-\sigma}}{1+|x|}  \right\|_{L^\infty} 
    	\! \! \! \!\diff \sigma .
\end{aligned}
\end{equation*}
The second constant 
obtained from \eqref{eq:dual continuous dependence on E - hat E / (1 + |x|) b} by taking the supremum when $\Lip(\psi_0) \le 1$.
Eventually, we can simplify this expression to \eqref{eq:ST P1 weigth 1 + |x|}. Therefore, we have uniqueness of dual viscosity solutions for any $D \ge 0$.

For $\diffc > 0, \mu_0 \in H^1(\Rd)$ and ${\bm E}_t \in C([0,T]; W^{2,\infty} (\Rd; \Rd))$, the theory of existence and regularity of weak solutions is nowadays well known (see, e.g., \cite{Boccardo2003,Boccardo2021} and the references therein).
The dual problem is decoupled from \eqref{eq:main E known}, and we have already constructed the unique viscosity solutions of the dual problem (see \Cref{prop:dual existence Lipschitz}), that are also weak solutions when ${\bm E}$ is regular. For regular enough datum $\psi_0$, we can use $\psi$ as test function in the weak formulation of \eqref{eq:main E known} to deduce \eqref{eq:duality}. Hence, any weak solution of \eqref{eq:main E known} is a dual viscosity solution.

Let us now show the time continuity with respect to the $\mathcal P_1 (\Rd)$ distance in space. We take $\psi_0$ such that $\Lip(\psi_0) \le 1$, and we estimate
\begin{align*}
\left| \int_\Rd {\psi_0} \diff (\mu_{t+h} - \mu_t) \right| \le \left| \int_\Rd ( \psi^{t+h}_{t+h} - \psi^t_t ) \diff \mu_0 \right| \le \int_\Rd(1 + |x|) \diff \mu_0 \left\| \frac{\psi_{t+h}^{t+h} - \psi_t^t}{1+|x|} \right \| _{L^\infty (\Rd)}.
\end{align*}
This last quantity is controlled by continuous dependence on ${\bm E}$ and time continuity of \eqref{eq:dual E known}. First, letting $\widehat \psi_s = \psi^{t+h}_s$
with $\psi^{t+h}_0 = \psi_0$, which corresponds to $\widehat {\bm E}_s = {\bm E}_{t+h-s}$, we have that
\begin{equation*}
    \left\| \frac{\psi^{t+h}_{t} - \psi_t^t}{1+|x|} \right \| _{L^\infty (\Rd)} 
    \le 
    C 
    \sup_{\sigma \in [0,t]} \| {\bm E}_{t+h - \sigma} - {\bm E}_{t-\sigma}\|_{Lip_0}.
\end{equation*}
The right-hand side of this equation is a modulus of continuity, which we denote $\omega_E$.
Now we use the time continuity of \eqref{eq:dual E known} given by \eqref{eq:dual time estimate} to deduce that
\begin{equation*}
    \left\| \frac{\psi^{t+h}_{t+h} - \psi_t^{t+h}}{1+|x|} \right \| _{L^\infty (\Rd)} \le \omega_D (h),
\end{equation*}
where $C$ and $\omega_D$ are given by the right-hand side of \eqref{eq:dual time estimate}.
The constants are uniform for $\psi_0$ with $\Lip(\psi_0) \le 1$. Eventually, taking the supremum on $\psi_0$ and applying \eqref{eq:duality characterisation of d1} we deduce
\begin{equation*}
    d_1 (\mu_{t+h},\mu_t) \le \omega_D(h) + \omega_E(h).
\end{equation*}

Let us now consider the general case for $\diffc \ge 0$, $\mu_0$ and ${\bm E}_0$. The uniform estimates \eqref{eq:ST P1 weigth 1 + |x|} shows that, if $0 < \diffc^n \to \diffc, H^1(\Rd) \ni \mu_0^n \to \mu_0$ in $\mathcal P_1 (\Rd)$ and $C([0,T]; W^{2,\infty} (\Rd; \Rd)) \ni {\bm E}^n \to {\bm E}$ in $C([0,T]; \Lipschitz_0 (\Rd,\Rd))$, then the sequence $\mu^n$ is Cauchy in the metric space $C([0,T]; \mathcal P_1 (\Rd))$. Since this is a Banach space, the sequence $\mu^n$ converges to a unique limit $\mu$. 
Due to the stability of the dual problem, we can pass to the limit in \eqref{eq:duality} to check that $\mu$ is a dual viscosity solution. We have already shown the uniqueness at the beginning of the proof.
Due to the approximation, the solution constructed is also a distributional solution.

The semigroup property holds in the regular setting, so we can pass to the limit. This completes the proof.
\end{proof}

\section{Existence and uniqueness for \texorpdfstring{\eqref{eq:main}}{(P)}. Proof of \texorpdfstring{\Cref{thm:well-posedness in P1}}{Theorem \ref{thm:well-posedness in P1}}}
\label{sec:proof of main theorem}

We prove the existence by using Banach's fixed-point contraction theorem.
We construct the map
\begin{equation}
    \mathcal T_t : C([0,t] ; \mathcal P_1 (\Rd)^n ) \longrightarrow \mathcal P_1 (\Rd)^n
\end{equation}
by 
\begin{equation*}
    \mathcal T_t[\vec \mu] = \begin{pmatrix}  S_t[   \mu_0^1 ,  {\bm {\mathfrak K}}^1 [\vec \mu] ] \\  \vdots 
    \\ 
    S_t[   \mu_0^n ,  {\bm {\mathfrak K}}^n [\vec \mu]]
    \end{pmatrix}.
\end{equation*}

We must work on the bounded
cubes $Q(R) = B_{\mathcal P_1} (R)^n$ where we recall the definition of ball in Wasserstein space given by \eqref{eq:ball in Wasserstein}.
Hence, we take 
$$R > \max_{i=1, \cdots, n}  d_1(\mu_0^i , \delta_0).$$
To get a very rough estimate of $d_1 (\mathcal T_T[\bm \mu], \delta_0)$ we first indicate that when ${\bm E} = 0$ we recover the heat kernel at time $\diffc t$
\begin{equation*}
    S_t[\delta_0,0] (x) = H_{\diffc t} (x) =  \frac{1}{(4 \pi \diffc t )^{-\frac d 2}} \exp \left (  -\frac{|x|^2}{4\diffc t}  \right) .
\end{equation*}
Hence, using the triangle inequality
\begin{align*}
    d_1 (\delta_0, S_T[\mu_0, {\bm E}] ) &\le d_1(\delta_0, S_T[\delta_0,\bm 0]) 
    + d_1 (S_T[\delta_0, \bm 0], S_T[\mu_0, {\bm E}]).
\end{align*}
For the first term, we simply write
\begin{equation*}
    d_1(\delta_0, S_T[\delta_0,\bm 0]) = d_1 (\delta_0, H_{\diffc t}) \le \omega(\diffc t).
\end{equation*}
To get a uniform constant in \eqref{eq:ST P1 weigth 1 + |x|} define
\begin{equation}
    C_R (T)= \max_{\substack{ t\in [0,T] \\ i=1, \cdots, n  \\   \vec \mu \in C([0,T]; Q (R)) }} \|\nabla {\bm {\mathfrak K}}^i [\vec\mu]_t \|_{L^\infty(\Rd)} < \infty   
\end{equation}
by the assumptions. As a consequence,
for any $\bf \mu$ such that $d_1 (\mu_t^i, \delta_0) \le R$ for all $i = 1, \cdots, n $ and $t \le T_1$, then
$
 d_1 (\delta_0, \mathcal T_t[\bm \mu]^i)  \le  R
$
for all $i = 1, \cdots, n $
and  $t \le T_1$, i.e.,
\begin{equation*}
    \mathcal T   : C\Big([0,T_1] ; Q(R) \Big) \longmapsto  C\Big([0,T_1] ; Q(R) \Big).
\end{equation*}
Notice that, since we are constructing solutions using $S_T$, the constructed solution is also a distributional solution.

Applying again \eqref{eq:ST P1 weigth 1 + |x|} we have that 
for every $i = 1, \cdots, n$
\begin{align*}
    \sup_{t \in [0,T]} d_1 (\mathcal T^i_t[\vec \mu], \mathcal T^i_t[\widehat{ \vec \mu}] ) 
    & =  
    \sup_{t \in [0,T]} 
    	d_1 \Bigg(
    		S_t\Big[\mu_0^i,  {\bm {\mathfrak K}}^i [\vec \mu]\Big], 
    		S_t\Big[\mu_0^i,  {\bm {\mathfrak K}}^i[\widehat{\vec \mu}]\Big] 
    	\Bigg)\\
    &\le 
    C(T_1,R)
    	\int_0^T 
    		\left \| 
    			\frac { {\bm {\mathfrak K}}^i[\vec \mu]_\sigma 
    			- {\bm {\mathfrak K}}^i[\widehat{\vec \mu}]_\sigma }{1 +|x| } 
    		\right \|_{L^\infty}  
    	\diff \sigma  \\
    &\le 
    C(T_1,R) T L 
    \sup_{\substack { \sigma \in [0,T]  \\ j = 1, \cdots, n   }  }
    	d_1 (\mu^j_\sigma, \widehat \mu^j_\sigma), 
\end{align*}
where $C(T_1,R)$ is recovered again through \eqref{eq:ST P1 weigth 1 + |x|}.
Given that $R$ is fixed, we can find $T_2 < T_1$ so that the map $\mathcal T_{T_2}$ is contracting with the norm
\begin{equation*}
    d^{n,T_2}_1 (\vec \mu, \widehat {\vec \mu} ) = \sup_{\substack { t \in [0,T_2]\\ i=1,\cdots,n}} d_1 (\mu^i_t, \widehat \mu^i_t). 
\end{equation*}
Therefore, we can apply Banach's fixed-point theorem to proof existence and uniqueness for short times. 
Since $\mathfrak K$ is uniformly Lipschitz, we can extend the existence time to infinity applying the classical argument. 
To prove the continuous dependence on $\mu_0$, we apply \eqref{eq:duality P1 and weighted Linfty}, \eqref{eq:dual continuous dependence on E - hat E / (1 + |x|)} and \eqref{hyp:P1 Lipschitz}. This completes the proof of existence and uniqueness.
\qed

\begin{remark}
Following the non-explicit constant in \eqref{eq:dual continuous dependence on E - hat E / (1 + |x|)}, it would be possible to recover quantitative dependence estimates.
\end{remark}

\begin{remark}
[Numerical analysis when $\diffc = 0$: the particle method]
Our notion of solution justifies the convergence of the particle method when $\diffc = 0$. 
The aim of the particle method is to consider an approximation of the initial datum given by finitely many isolated particles 
\begin{equation*}
    \mu_0^{i,N} = \sum_{j=1}^N a^{ijN} \delta_{X_0^{ijN}} .
\end{equation*}
Then, it is not difficult to see that, for $\diffc = 0$ the solution is given by particles
\begin{equation*}
    \mu_t^{iN} = \sum_{j=1}^N a^{ijN} \delta_{X_t^{ijN}} .
\end{equation*}
The evolution of these particles is given by a system of ODEs for the particles
\begin{equation*}
        \partial_t X^{ijN}_t = - \sum_{j} a^{ijN} { {\bm K}}^{i}_t [\vec \mu^N_t ] (X^{ijN}_t)
\end{equation*}
This system is well posed. 
Due to the continuous dependence
\begin{equation*}
    \sup_{t \in [0,T]}  d_1 (\vec \mu_t, \vec \mu_t^N) \le C(T , \vec \mu_0)   d_1 (\vec \mu_0, \vec \mu_0^N)
\end{equation*}
It is easy to see that the finite combinations of Dirac deltas is dense in $1$-Wasserstein distance, and estimates for convergence are well known (see, e.g., \cite{Jabin2014}).

\end{remark}

\section{Gradient flows of convex interaction potentials and \texorpdfstring{$\diffc = 0$}{D=0}}
\label{sec:gradient flows}
The aim of this section is to give an application of our results to the classical aggregation equation
\begin{equation}
\label{eq:aggregation equation}
    \partial_t \mu_t = \diver (\mu_t \nabla W * \mu_t).
\end{equation}
This problem falls into the well-posedness theory developed in \Cref{thm:well-posedness in P1} provided that
$  D^2 W \in L^\infty(\Rd)^{d\times d}$ for the $\mathcal P_1 (\Rd)$ theory. 
In the context of gradient-flow solutions (see, e.g., \cite{Ambrosio2005}) we are able to weaken the hypothesis on $\mathfrak K$.
\begin{theorem}
\label{thm:gradient flows}
    Let $\mu_0 \in \mathcal P_2(\Rd) $ be compactly supported. Assume $W$ is convex and that $W \in C^{1,s}_{loc} (\Rd)$. Then, the gradient flow solution $\mu \in C([0,T]; \mathcal P_2 (\Rd))$ is a dual-viscosity solution.
\end{theorem}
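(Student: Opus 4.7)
The plan is to combine the stability theorem \Cref{thm:stability} with a smooth approximation of the potential $W$ for which \Cref{thm:well-posedness in P1} applies directly. The strategy is: approximate, solve for the approximation, identify the approximate dual viscosity solution with the approximate gradient flow, and pass to the limit.

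First, I would choose a sequence of convex $W^k \in C^2(\Rd)$ with globally bounded Hessian such that $W^k \to W$ in $C^1_{\mathrm{loc}}(\Rd)$ (for instance, mollify $W$ and modify it outside a large ball by matching it in $C^1$ to a quadratic). The operator $\bm{\mathfrak K}^k[\mu]_t = \nabla W^k * \mu_t$ then satisfies the hypotheses \eqref{hyp:P1 domain}--\eqref{hyp:P1 Lipschitz}, so \Cref{thm:well-posedness in P1} yields a unique dual viscosity solution $\mu^k$ starting from $\mu_0$ which is also distributional. I would then identify $\mu^k$ with the $2$-Wasserstein gradient flow $\nu^k$ of the energy $\mathcal F_k[\mu] = \tfrac 1 2 \iint W^k(x-y) \diff\mu(x) \diff\mu(y)$: both are distributional solutions of the same aggregation equation, and since $\nabla W^k$ is globally Lipschitz, uniqueness of distributional solutions (obtained through the non-local characteristic flow, as recalled after \eqref{eq:set-up DiFrancesco2013}) forces $\mu^k = \nu^k$ on $[0,T]$.

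Next, I would pass to the limit $k \to \infty$. The classical stability of gradient flows for convex interaction potentials together with $W^k \to W$ in $C^1_{\mathrm{loc}}$ gives $\nu^k \to \mu$ in $C([0,T]; \mathcal P_2(\Rd))$, and in particular $\mu^k_t \overset{\star}{\rightharpoonup} \mu_t$ for every $t$. The compact support of $\mu_0$ together with uniform local bounds on $\nabla W^k$ confines all supports $\supp(\nu^k_t)$ in a common compact set $K_T \subset \Rd$ on $[0,T]$, so $\bm{\mathfrak K}^k[\mu^k]_t = \nabla W^k * \mu^k_t \to \nabla W * \mu_t$ uniformly on compacts of $[0,T] \times \Rd$. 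Then \Cref{lem:viscosity solutions stability} yields $\Psi^{T,k}[\psi_0] \to \Psi^{T,\infty}[\psi_0]$ locally uniformly for each $\psi_0 \in C_c^\infty(\Rd)$, and the duality identity passes to the limit because the $\mu^k_t$ are uniformly compactly supported in $K_T$. \Cref{thm:stability} then concludes that $(\mu, \Psi^\infty)$ is an entropy pair, i.e. $\mu$ is a dual viscosity solution.

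The main obstacle will be step two, namely matching the dual viscosity solution with the gradient flow for the smooth approximating $W^k$, since the two notions are constructed in different Wasserstein spaces and tested against different classes of functions. The resolution is that, once $\nabla W^k$ is globally Lipschitz, both objects are realized as the unique push-forward solution of the non-local characteristic ODE driven by $-\nabla W^k * \mu_t$, and therefore coincide in $C([0,T]; \mathcal P_1(\Rd))$; alternatively, one could invoke the $d_1$-contraction estimate of \Cref{prop:main E known continuous dependence} combined with the bound $|\nabla W^k * \mu(x) - \nabla W^k * \nu(x)| \le \Lip(\nabla W^k) \, d_1(\mu,\nu)$ to obtain a Grönwall inequality that forces $\mu^k = \nu^k$.
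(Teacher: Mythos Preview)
Your overall strategy coincides with the paper's: approximate $W$ by convex $W^k$ with bounded Hessian, identify the resulting dual viscosity solution with the gradient flow via the push-forward characteristics, and pass to the limit through \Cref{thm:stability}. The identification step and the convergence $\mu^k\to\mu$ in $C([0,T];\mathcal P_2)$ via gradient-flow stability are exactly what the paper does.

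There is, however, a genuine gap in your passage to the limit for the dual solutions. You write that ``\Cref{lem:viscosity solutions stability} yields $\Psi^{T,k}[\psi_0]\to\Psi^{T,\infty}[\psi_0]$ locally uniformly''. That lemma does not produce convergence: it assumes $\psi^k\to\psi^\infty$ uniformly on compacts and then asserts that the limit is a viscosity solution. You must first establish relative compactness of $\{\psi^{k,T}\}_k$ in $C([0,T]\times\Rd)$, i.e.\ uniform-in-$k$ equicontinuity, and this is precisely the point that does \emph{not} follow from the general estimates of \Cref{prop:dual existence Lipschitz}. Those estimates (both \eqref{eq:dual grad estimate} and \eqref{eq:dual time estimate}) carry constants depending on $\|\nabla \bm E^k\|_{L^\infty}=\|D^2W^k*\mu^k\|_{L^\infty}$, and since $W$ is only $C^{1,s}_{loc}$ the quantities $\|D^2W^k\|_{L^\infty}$ need not stay bounded as $k\to\infty$.

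The paper supplies the missing uniform bounds by two ad hoc arguments that exploit the structure of the problem. For space regularity, convexity of $W^k$ makes $\bm E^k=\nabla W^k*\mu^k$ monotone, so characteristics separate and one gets $\|\nabla\psi^{k,T}_s\|_{L^\infty}\le\|\nabla\psi_0\|_{L^\infty}$ with no constant depending on $k$. For time regularity, one restricts to $\psi_0\in C_c^\infty$, tracks the (finite) support of $\psi^{k,T}_s$ via \eqref{eq:dual support estimate}, and builds barriers $\psi_{s_0}\pm C_0(s-s_0)$ with $C_0$ controlled only by $\sup|\bm E^k|$ on a fixed compact set, which \emph{is} uniform in $k$ thanks to the uniform confinement $\supp\mu^k_t\subset A_0$ and $\nabla W^k\to\nabla W$ in $C_{loc}$. (Incidentally, that confinement comes from convexity of $W^k$, which forces the velocity to point into the convex hull of $\supp\mu_0$; ``uniform local bounds on $\nabla W^k$'' alone would not prevent the support from spreading.) With these two uniform moduli in hand, Ascoli--Arzel\`a gives a convergent subsequence and only then does \Cref{lem:viscosity solutions stability} identify the limit.
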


We can approximate $W$ by a sequence of convex functions $W^k$, so that $\nabla W^k \to \nabla W$ uniformly over compacts but satisfies $D^2 W^k \in L^\infty (\Rd) $.
We construct $\mu^k_t$ through \Cref{thm:well-posedness in P1}. Since $\mu_0$ is compactly supported and $W^k$ is convex, if we pick $A_0$ the convex envelope of $\supp \mu_0$, then
\begin{equation*}
    \supp \mu_t^k \subset A_0, \qquad \forall t > 0.
\end{equation*}
This is easy to see, for example, looking at the solution by characteristics. We set ourselves in the hypothesis of \Cref{thm:stability} by proving below that
\begin{enumerate}
    \item $\psi^{k,T}$ for every $\psi_0 \in C_c^\infty (\Rd)$ are uniformly equicontinuous.
    
    \item $\mu^k \to \mu $ in $C([0,T]; \mathcal P_2 ( \Rd))$.
    
    \item $W^k * \mu^k \to W*\mu$ uniformly over compacts. 
\end{enumerate}

\begin{remark}
We point that the result in \Cref{thm:gradient flows} can be extended from \eqref{eq:aggregation equation} to a system of equations coming from the gradient flow of interaction potentials between the different components. To fix ideas, we can treat systems of two species like
\begin{equation*}
   \begin{dcases}
       \partial_t \rho_1 = \diver( \rho_1 \nabla  ( H_1 * \rho_1 +  K * \rho_2 ) \\
        \partial_t \rho_2 = \diver( \rho_2 \nabla  ( H_2 * \rho_2+  K * \rho_1 ) 
   \end{dcases}
\end{equation*}
See, e.g., \cite{DAF18}.
\end{remark}

\paragraph{Regularity in space.} Constructing solutions by characteristics it is well known that
\begin{equation*}
    ({\bm E}_{T-s} (x) - {\bm E}_{T-s} (y)) \cdot (x-y) \ge 0,
\end{equation*}
then characteristics grow apart. Then, the solution of \eqref{eq:dual linear} satisfies 
\begin{equation}
    \label{eq:decay of gradient}
    \| \nabla \psi_s \|_{L^\infty (\Rd)} \le \| \nabla \psi_0 \|_{L^\infty(\Rd)}.
\end{equation}
    We check that this holds for ${\bm E} = \nabla W * \mu$. Since $W$ is convex, then
    \begin{equation*}
        (\nabla W (x) - \nabla W (y)) \cdot  (x-y) = (x-y) \cdot  D^2 W(\xi(x,y)) (x-y) \ge 0.
    \end{equation*}
    Then, the convolution with a non-negative measure is also convex
    \begin{align*}
        \Big(\nabla W * \mu (x) - \nabla W * \mu  (y)\Big) \cdot  \Big(x-y \Big) &= \int_\Rd \Big( \nabla W (x-z) - \nabla W (y-z)  \Big) \cdot \Big( (x-z) - (y-z) \Big) \diff \mu_0 (z) \\
        &\ge 0,
    \end{align*}
    and so \eqref{eq:decay of gradient} holds.

\paragraph{Regularity in time.}
First, we look at the evolution of the support. When ${\bm E}_{T-s}$ is locally bounded, we can construct a super-solutions by characteristics.
We define
\begin{equation*}
    A(s_0,s_1) = \bigcup_{s \in [s_0,s_1]} \supp \psi_s .
\end{equation*}
If we assume that $A(s,0) \subset B(x_0,R_s)$ then 
\begin{equation*}
    \partial_s R_s \le \sup_{B(x_0,R_s)} |{\bm E}_{T-s}|.
\end{equation*}
So we end up with an estimate
\begin{equation}
    \label{eq:dual support estimate}
    A(s,0) \subset \supp \psi_0 + B_{R_s}. 
\end{equation}

Assume that $\psi_0 \in C_c^\infty (\Rd)$ with $\|\nabla \psi_0\|_{L^\infty(\Rd)} \le 1$. Take $s_0 < s$. Consider
\begin{equation*}
    \underline \psi_s = \psi_{s_0} - C_0 (s-s_0), \qquad \overline \psi_s = \psi_{s_0} + C_0 (s-s_0).
\end{equation*}
We have that
\begin{equation*}
    \partial_s \underline \psi_s - {\bm E}_{T-s} \cdot \nabla \underline \psi_{s} = - C_0 - {\bm E}_{T-s} \cdot \nabla \psi_{s_0}, \qquad \partial_s \overline \psi_s - {\bm E}_{T-s} \nabla \overline \psi_{s} = C_0 - {\bm E}_{T-s} \nabla \psi_{s_0}.
\end{equation*}
They are a sub and super-solution in $[s_0,s_1]$ if
\begin{equation*}
    C_0 = \|\nabla \psi_{s_0}\|_{L^\infty}  \sup_{[s_0,s_1] \times A(s_0,s_1) } |{\bm E}|. 
\end{equation*}
Hence, we deduce that
\begin{equation}
\label{eq:dual uniform time continuity}
    \left\| \frac{\psi_{s_1} - \psi_{s_0}}{s_1-s_0} \right\|_{L^\infty (\Rd)} \le  \|\nabla \psi_{s_0}\|_{L^\infty}  \sup_{[s_0,s_1] \times A(s_0,s_1) } |{\bm E}| .
\end{equation}
This implies that a uniform bound on the time continuity based on $T$, local bounds of ${\bm E}$ and the support of $\psi_0$.

\paragraph{Convergence of the convolution.}
Following  \cite[Theorem 11.2.1]{Ambrosio2005} 
the $\Gamma$-convergence of uniformly $\lambda$-convex interaction free energy functionals is sufficient so that
\begin{equation}
\label{eq:grad flow P2 convergence}
    \sup_{[0,T]} d_2 (\mu_t^k , \mu_t)\to 0, \qquad \text{ as } k \to \infty.
\end{equation}
In particular, if $W \in C^s$ is convex and $W^k (x) (1 +|x|)^{-2} \to W(x) (1 + |x|)^{-2}$ uniformly, \eqref{eq:grad flow P2 convergence} holds. 
It follows that the sequence $\mu^k$ is uniformly continuous, i.e., the function
\begin{equation*}
    \omega(h) = \sup_k \sup_{t \in [0,T-h]} d_2 (\mu_{t+h}^k , \mu^k_t)
\end{equation*}
is a modulus of continuity.

On the other hand, for the convergence of $\nabla W^k * \mu^k$ we prove the following result.
\begin{lemma}
\label{lem:grad flows convergence of convolution}
Assume that
\begin{enumerate}
    \item $\nabla W \in C^s_{loc}$
    \item $\nabla W^k \to \nabla W$ uniformly over compacts of $\Rd$.
    \item $\mu_t^k, \mu_t \in \mathcal P (\Rd)$, and, for every $t > 0$, $\mu^k_t \rightharpoonup \mu_t$ weak-$\star$ in $\mathcal M (\Rd)$
    \item There exists $A_0 \subset \Rd$ convex bounded such that for all $t > 0$, $\supp \mu_t^k  \subset A_0$
    \item $\mu^k \in C([0,T]; \mathcal P_2 (\Rd))$ are uniformly continuous.
\end{enumerate}
Then
\begin{equation*}
    \nabla W^k * \mu^k \to \nabla W * \mu \text{ uniformly over compacts of } [0,T] \times \Rd .
\end{equation*}
\end{lemma}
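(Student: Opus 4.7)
The plan is to split the difference through a triangle inequality,
\begin{equation*}
\nabla W^k * \mu_t^k - \nabla W * \mu_t = (\nabla W^k - \nabla W) * \mu_t^k + \nabla W * (\mu_t^k - \mu_t),
\end{equation*}
and bound each term uniformly on an arbitrary compact $[0,T]\times K$. Since $\supp \mu_t^k \subset A_0$, the first term is an integral over $A_0$ of $\nabla W^k - \nabla W$ evaluated in the compact set $K - A_0$, so by hypothesis~2 it is dominated by $\|\nabla W^k - \nabla W\|_{L^\infty(K-A_0)} \to 0$, uniformly in $(t,x) \in [0,T]\times K$.

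For the second term, my strategy is to first upgrade the pointwise-in-$t$ weak-$\star$ convergence (hypothesis~3) to a $d_1$-convergence uniform in $t$. The common support in $A_0$ makes the family $\{\mu_t^k\}_{t,k}$ tight, hence precompact in $\mathcal P_1(\overline{A_0})$, and on that compact set the weak-$\star$ and $d_1$ topologies coincide. Combined with the uniform time modulus provided by hypothesis~5 (which gives the same modulus for $d_1 \le d_2$), an Arzel\`a--Ascoli argument in $C([0,T]; \mathcal P_1(\overline{A_0}))$ produces a uniformly convergent subsequence; the pointwise weak-$\star$ identification of the limit then forces the whole sequence to satisfy $\sup_{t\in[0,T]} d_1(\mu_t^k, \mu_t) \to 0$.

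The remaining subtlety is that $\nabla W \in C^s_{loc}$ is only H\"older, so it is not admissible as a test function in the Kantorovich--Rubinstein dual characterisation of $d_1$. I would handle this by approximating $\nabla W$ on the compact set $K - A_0$ by Lipschitz maps $f_\varepsilon$ (componentwise mollification suffices) with $\|f_\varepsilon - \nabla W\|_{L^\infty(K-A_0)} \le \varepsilon$. Since $\mu_t^k$ and $\mu_t$ are both supported in $A_0$, for every $x \in K$,
\begin{equation*}
\bigl|\nabla W * (\mu_t^k - \mu_t)(x)\bigr| \le 2\varepsilon + \Lip(f_\varepsilon)\, d_1(\mu_t^k, \mu_t).
\end{equation*}
Letting $k \to \infty$ and then $\varepsilon \to 0$ closes the estimate. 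The main obstacle is precisely this H\"older-versus-Lipschitz mismatch; the geometric hypotheses on the common support $A_0$ and on the uniform time modulus are the bridge that makes the approximation argument succeed uniformly in $(t,x)$, and without either of them one would lose either tightness of $\mu_t^k$ in $t$ or the ability to trade $\varepsilon$ against $\Lip(f_\varepsilon)$.
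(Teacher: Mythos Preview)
Your proof is correct, and the first term is handled exactly as in the paper. For the second term, however, you take a genuinely different route. The paper applies Ascoli--Arzel\`a directly to the family of continuous functions $(t,x)\mapsto \nabla W*\mu_t^k(x)$: pointwise convergence comes from weak-$\star$ convergence of $\mu_t^k$ tested against the continuous function $\nabla W(x-\cdot)$, and equicontinuity is obtained from the explicit H\"older bounds $|\nabla W*\mu^k(x)-\nabla W*\mu^k(y)|\le C_A|x-y|^s$ in space and, via an optimal plan, $|\nabla W*\mu^k_t-\nabla W*\mu^k_\tau|\le C_A\, d_2(\mu_t^k,\mu_\tau^k)^s\le C_A\,\omega(|t-\tau|)^s$ in time. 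You instead apply Ascoli--Arzel\`a one level down, to the curves $t\mapsto\mu_t^k$ in the compact metric space $\mathcal P_1(\overline{A_0})$, upgrading the pointwise weak-$\star$ convergence to uniform $d_1$-convergence, and then transfer this to the convolutions by a Lipschitz approximation of $\nabla W$ on $K-\overline{A_0}$. Your argument is somewhat more modular and in fact only uses continuity of $\nabla W$, not the H\"older exponent $s$; the paper's approach, by contrast, extracts explicit H\"older moduli on $\nabla W*\mu^k$ along the way, which is a slightly stronger intermediate conclusion.
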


\begin{proof}
We use the intermediate element $\nabla W * \mu^k$. First, for $A \subset \Rd$ compact
\begin{equation*}
    \sup_{x \in A} \left| \int_\Rd (\nabla W^k (x-z) - \nabla W (x-z)  ) \diff \mu_s^k  \right| \le \sup_{A + A_0} |\nabla W^k -\nabla W |.
\end{equation*}
Hence, we have that
\begin{equation}
    \label{eq:approx conv 1}
    \sup_{\substack {t \in [0,T] \\ x \in A } } \left|  \nabla W^k * \mu_t^k (x) - \nabla W * \mu_t^k (x) \right| \le \sup_{A + A_0} | \nabla W^k - \nabla W|.
\end{equation}

Due to weak-$\star$ convergence, if $\nabla W \in C^s_{loc}$ then
\begin{equation*}
    \nabla W * \mu^k_t (x) \to \nabla W * \mu_t (x) \text{ for each }  (t,x) \in [0,T] \times \Rd 
\end{equation*}
Now we prove uniform continuity. First in space. Let $A \subset \Rd$ be compact. Take
\begin{equation*}
    C_K  = \sup_{ \substack { x, y \in A - A_0 \\ x \ne y} } \frac{|\nabla W(x) - \nabla W(y)|}{|x-y|^s}
\end{equation*}
Now, for $x,y \in A$ we can compute
\begin{equation*}
    \left|\nabla W* \mu^k (x) - \nabla W*\mu^k (y) \right| \le \int_{A_0} |\nabla W(x-z) - \nabla W(y-z)| \diff \mu^k (z)\le C_A |x-y|^s.
\end{equation*}
So $\nabla W * \mu^k$ is uniformly continuous in $x$ over compacts of $[0,T] \times \Rd$.

Lastly, let $\pi$ be optimal plan between $\mu^k_t$ and $\mu^k_{\tau}$. Due to assumption 4, the optimal plan  can be selected so that $\supp \pi \subset A_0 \times A_0$.
Let $z \in A$. We have that
\begin{align*}
		\left| \int_\Rd \nabla W(z-y) \diff \mu^k_t(x) - \int_\Rd \nabla W(z-y) \diff  \mu^k_{\tau}(y) \right| 
		&= \left| \iint ( \nabla W(z-x) - \nabla W(z-y) ) \diff \pi(x,y) \right| \\
		&\le C_A \iint |x-y|^s \diff \pi(x,y) \\
		&\le C_A \left( \iint |x-y|^2 \diff \pi(x,y) \right)^{\frac s 2}\\
        &= C_A d_2 (\mu^k_t, \mu^k_{\tau})^s \le C_A \omega (h)^s.
\end{align*}
Hence, we have the uniform estimate of continuity in time
\begin{equation*}
    \sup_{x \in K} |\nabla W * \mu^k_t - \nabla W * \mu^k_{\tau}| \le C_A \omega(|t-\tau|)^s. 
\end{equation*}
And we finally deduce that
\begin{equation*}
    \sup_{\substack{ t,\tau \in [0,T] \\ x, y \in K}} |\nabla W * \mu^k_t (x) - \nabla W * \mu^k_\tau (y)| \le C_A (|x-y| + \omega (|t - \tau|)^s).
\end{equation*}
In particular, by the Ascoli-Arzelá theorem, there is a subsequence converging uniformly in $[0,T]$. Since we have characterised the point-wise limit, every convergent subsequence does so to $\nabla W * \mu$. Hence, the whole sequence converges uniformly over compacts, i.e.,
\begin{equation}
\label{eq:approx conv 2}
        \sup_{\substack{ t \in [0,T] \\ x \in A}} |\nabla W * \mu^k_t (x) - \nabla W * \mu_t (x)| \to 0 , \qquad \text{as } k \to \infty.
\end{equation}
Using the triangular inequality, \eqref{eq:approx conv 1}, and \eqref{eq:approx conv 2} the result is proven.
\end{proof}

\begin{proof}[Proof of \Cref{thm:gradient flows}]
When $W^k$ is $C^2$, we can construct a unique classical solution as the push-forward of regular characteristics. This solution is well-known to be the gradient flow solution. 
By construction, it coincides with the unique dual viscosity solution that exists by \Cref{thm:well-posedness in P1}.

First, we showed in \eqref{eq:grad flow P2 convergence} that $\mu^k$ converges to the gradient flow solution in $C([0,T]; \mathcal P_2 (\Rd))$, and let us denote it by $\overline \mu$.
Now we apply \Cref{thm:stability} 
where the hypothesis have been check in \eqref{eq:dual uniform time continuity} (using \eqref{eq:dual support estimate}), and
\Cref{lem:grad flows convergence of convolution} to show that $\overline \mu$ is a dual viscosity solution. This completes the proof.
\end{proof}

\section{A \texorpdfstring{$\dot H^{-1}$}{dot H-1} and \texorpdfstring{$2$}{2}-Wasserstein theory when \texorpdfstring{$\diffc > 0$}{D>0}}
\label{sec:well-posedness in H-1}
\subsection{Notion of solution and well-posedness theorem}
Many of cases of  \eqref{eq:main} studied in the literature  are $2$-Wasserstein gradient flow. Our situation is more general. Unfortunately, the 2-Wasserstein distance, $d_2$, does not have a duality characterisation similar to $d_1$. 
It is known (see, e.g., \cite{Peyre2018}) that it can be one-side compared with the $\dot H^{-1}(\Rd)$
\begin{equation}
    \label{eq:H-1 embedding W2}
    d_2 (\mu, \widehat \mu) \le 2 [ \mu - \hat \mu ]_{\dot H^{-1} (\Rd)}, 
\end{equation}
where, for $\mu \in D' (\Rd)$ we define the norm 
\begin{equation*}
    \|\mu\|_{\dot H^{-1}(\Rd)} = \sup_{ \substack{ f \in C_c^\infty (\Rd) \\  \| \nabla f \|_{L^2} \le 1} }   |\mu (f)|.
\end{equation*}
The converse inequality to \eqref{eq:H-1 embedding W2} only holds for absolutely continuous measures, and the constant depends strongly on the uniform continuity.

We consider the Sobolev semi-norm 
$
    [f]_{H^1} = \|\nabla f\|_{L^2}.
$
The space $(C_c^\infty (\Rd) , [ \cdot ]_{H^1})$ is a normed space. Notice that, if $[f]_{H^1} = 0$ then $f$ is constant. But since it has compact supported, the value of the constant is $0$. This allows to define the dual space
\begin{equation*}
    \dot H^{-1} (\Rd) = (C_c^\infty(\Rd), [ \cdot ]_{H^1(\Rd)})'.
\end{equation*}
Since it is the dual of a normed space, $\dot H^{-1} (\Rd)$ is a Banach space. 

\begin{remark}
    The space $\dot H^1(\Rd)$ is defined as the completion of $(C_c(\Rd), [ \cdot ]_{H^1(\Rd)})$, which is easy to see is not complete itself. This completion can be complicated (see, e.g., \cite{Brasco2021}). 
    Hence, with our construction $\dot H^{-1} (\Rd)$ is not  the dual of $\dot H^{1} (\Rd)$. 
\end{remark}

Similarly to above, we define
\begin{definition}
	We say that $(\vec \mu, \{\vec \Psi^T\}_{T\ge 0})$ is an $\dot H^{-1}$ entropy pair
	if:
	\begin{enumerate}
		\item For every $T \ge 0$, 
		$$
		    \vec \Psi^T: X = \{ \psi_0 \in C_c(\Rd) : \nabla \psi_0 \in L^2 (\Rd) \} \longrightarrow C([0,T] ; X )^n
		$$ 
		is a linear map with the following property: for every $\psi_0$ and $i = 1, \cdots, n$ we have $\Psi^{T,i}[\psi_0]$ is a viscosity solution of \eqref{eq:dual}.
		\item For each $i$ and $T \ge 0$, $\mu_T^i \in \dot H^{-1} (\Rd)$ and satisfies the duality condition \eqref{eq:duality}
	\end{enumerate}
\end{definition}
\begin{remark}
    \label{rem:dot H -1 compact support}
    Notice that in this section we require that $\psi_0$ is compactly supported, and thus the unique viscosity solutions will satisfy $\psi(x) \to 0$ as $|x| \to \infty$.
\end{remark}

The main result of this section is
\begin{theorem}
    \label{thm:well-posedness in H-1}
    Let $\diffc > 0$, $\mu_0 \in \dot H^{-1} (\Rd)$ and assume that 
    \begin{equation}
    \label{eq:K defined from dot H1}
        \bm {\mathfrak K}^i : C([0,T],  \dot H^{-1} (\Rd)) \to C([0,T],  W^{1,\infty} (\Rd))
    \end{equation}
    is Lipschitz with data in $ \dot H^{-1} (\Rd)$ in the sense that, for any $\vec \mu$ and $\widehat{\vec \mu}$ in $C([0,T]; \dot H^{-1} (\Rd))$
    \begin{equation}
    \label{eq:K Lipschitz from dot H1}
         \sup_{\substack{t \in [0,T] \\ i = 1, \cdots, n}}  \left\|   {\bm {\mathfrak K}^i[\vec \mu]_t - \bm {\mathfrak K}^i[ \widehat {\vec \mu}]}_t  \right\|_{W^{1,\infty}(\Rd)} \le L \sup_{\substack{t \in [0,T] \\ i=1,\cdots,n }} \| \mu^i_t - \widehat \mu ^i_t \|_{\dot H^{-1}(\Rd)}.
    \end{equation}
    Then there exists exactly one dual viscosity solution $\vec \mu  \in C([0,T], \dot H^{-1} (\Rd)^n)$.
    
    Furthermore, if $\bm {\mathfrak K}^i$ is autonomous (i.e., $\bm {\mathfrak K}^i[\vec \mu]_t = {\bm K}^i[\vec \mu_t]$), then the map $S_T : \vec \mu_0 \in \dot H^{-1} (\Rd)^n \mapsto \vec \mu_T \in \dot H^{-1} (\Rd) ^n$ is a continuous semigroup.
\end{theorem}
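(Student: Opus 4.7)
The plan is to mirror the three-tier argument of \Cref{sec:solutions of PE*}--\Cref{sec:proof of main theorem}, replacing the pair $(\mathcal P_1, \Lipschitz_0)$ by the Hilbertian pair $(\dot H^{-1}, \dot H^1)$. The whole strategy relies on the duality characterisation
\[
\|\mu - \widehat\mu\|_{\dot H^{-1}} = \sup_{[\psi_0]_{H^1} \le 1} \int_\Rd \psi_0 \diff(\mu - \widehat\mu),
\]
and, crucially, on the strict positivity of $D$, which supplies the dissipation needed to close the energy estimates.

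First I would prove $\dot H^1$ analogues of \Cref{prop:dual existence Lipschitz} for the dual problem \eqref{eq:dual E known}. Multiplying the equation by $-\Delta \psi_s$ and integrating by parts, the transport term is bounded by $C\|\nabla \bm E\|_{L^\infty}[\psi_s]_{H^1}^2$ and the diffusion produces the dissipation $D\int |\Delta \psi_s|^2$; Gronwall then gives
\[
[\psi_T^T]_{H^1} \le [\psi_0]_{H^1} \exp\Big( C \int_0^T \|\nabla \bm E_\sigma\|_{L^\infty} \diff \sigma \Big).
\]
For continuous dependence on $\bm E$, $\widetilde\psi = \psi - \widehat\psi$ solves a perturbed equation with source $(\bm E - \widehat{\bm E}) \cdot \nabla \widehat\psi$, and the same energy method produces a forcing term $-\int (\bm E - \widehat{\bm E}) \cdot \nabla \widehat\psi \,\Delta \widetilde\psi$ that I would handle by Young's inequality, absorbing $\tfrac{D}{2}\int|\Delta \widetilde\psi|^2$ against the dissipation. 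This gives
\[
[\psi_T^T - \widehat\psi_T^T]_{H^1} \le C(D,T,\|\nabla \bm E\|) \Big( \int_0^T \|\bm E_\sigma - \widehat{\bm E}_\sigma\|_{L^\infty}^2 \diff \sigma \Big)^{1/2}.
\]
A standard approximation (smoothing $\bm E$ and $\psi_0$) promotes these a priori estimates to the unique viscosity solution already furnished by \Cref{prop:dual existence Lipschitz}.

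Second, using these dual estimates together with the duality identity, I would transplant the construction of \Cref{prop:main E known continuous dependence}: for each $\bm E \in C([0,T]; W^{1,\infty})$ and $\mu_0 \in \dot H^{-1}$, one obtains a unique $\dot H^{-1}$ dual viscosity solution $S_T[\mu_0,\bm E] \in C([0,T];\dot H^{-1})$ satisfying a contraction estimate
\[
\|S_T[\mu_0,\bm E] - S_T[\widehat \mu_0, \widehat{\bm E}]\|_{\dot H^{-1}} \le C \Big( \|\mu_0 - \widehat \mu_0\|_{\dot H^{-1}} + \int_0^T \|\bm E_\sigma - \widehat{\bm E}_\sigma\|_{W^{1,\infty}} \diff \sigma \Big).
\]
With this in hand, the fixed-point scheme of \Cref{sec:proof of main theorem} applies verbatim: on the ball $\{\vec \mu : \sup_t \|\vec \mu_t\|_{\dot H^{-1}} \le R\}^n$, the map $\mathcal T_t[\vec \mu]^i \defeq S_t[\mu_0^i, \bm{\mathfrak K}^i[\vec \mu]]$ is a self-map by the a priori bound and, by \eqref{eq:K Lipschitz from dot H1}, a contraction for $t$ small. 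Banach's fixed-point theorem then delivers local existence and uniqueness, and the uniformity of the Lipschitz constant in \eqref{eq:K Lipschitz from dot H1} allows iteration up to arbitrary $T$. The semigroup property in the autonomous case is immediate from uniqueness, exactly as in the $\mathcal P_1$ setting.

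The main obstacle is the dual continuous-dependence estimate in $[\cdot]_{H^1}$. Unlike the $L^\infty$-based arguments of \Cref{sec:solutions of PE*}, no pointwise comparison principle is available, and the forcing term $(\bm E - \widehat{\bm E}) \cdot \nabla \widehat\psi$ unavoidably produces second derivatives of $\widehat \psi$ after integration by parts. Controlling these without any a priori bound on $D^2 \widehat\psi$ is possible \emph{only} because the parabolic dissipation $D\int|\Delta \widetilde\psi|^2$ can absorb them via Young's inequality. This is why the hypothesis $D > 0$ is intrinsic to this section, and why the constants in the final estimate blow up like $D^{-1}$ as $D \searrow 0$, in sharp contrast with the $d_1$-based theory of \Cref{thm:well-posedness in P1}.
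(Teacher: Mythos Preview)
Your proposal is correct and follows the same blueprint as the paper: the $\dot H^{-1}$ duality inequality \eqref{eq:H-1 continuous dependence by duality}, $\dot H^1$ estimates for the dual problem (the paper's \Cref{prop:dual L2 estimates}), the resulting $\dot H^{-1}$ solution operator (the paper's \Cref{prop:E known in H-1}), and then Banach's fixed-point theorem exactly as in \Cref{sec:proof of main theorem}. The only difference is organisational: you test the dual equation directly against $-\Delta\psi$ (resp.\ $-\Delta\widetilde\psi$), whereas the paper first differentiates to obtain the system for $U^i=\partial_{x_i}\psi$ and then runs the $L^2$ energy on each component (their \eqref{eq:psi derivative Lp estimate} at $p=2$, and the $I_1$--$I_5$ splitting for continuous dependence); both routes isolate the same crux---a second-order term in the difference equation that can only be absorbed by the parabolic dissipation via Young's inequality---and hence the same $D^{-1}$ blow-up. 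Your organisation is marginally sharper (only $\|\bm E-\widehat{\bm E}\|_{L^\infty}$ appears in the forcing, versus the full $W^{1,\infty}$ norm in the paper's \eqref{eq:dual grad psi L2 cont dep}), but this does not affect the theorem since \eqref{eq:K Lipschitz from dot H1} is already stated in $W^{1,\infty}$.
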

We point that we can only get the $\dot H^{-1}(\Rd)$ theory when diffusion is present (i.e., $\diffc > 0$). %
{%
When $D = 0$ we cannot get an estimate of $\|\nabla \psi_s\|_{L^2}$.
}
\begin{proof}[Proof of \Cref{thm:well-posedness in H-1}] 
    The proof follows the blueprint of the proof of \Cref{thm:well-posedness in P1} using a fixed-point argument. We need an adapted version of \eqref{eq:duality P1 and weighted Linfty} given by
    \begin{equation} 
    \label{eq:H-1 continuous dependence by duality}
    \begin{aligned}
        \|\mu_T - \widehat \mu_T\|_{\dot H^{-1}(\Rd)} 
        \le &\, \|\mu_0 - \widehat \mu_0\|_{\dot H^{-1}(\Rd)} \sup_
        {\substack{ \psi_0 \in C_c^\infty (\Rd) \\ \| \nabla \psi_0 \|_{L^2 (\Rd)}  \le 1 }} \| \nabla \psi_T^T \|_{L^2(\Rd)}  \\
        & +  \|\widehat \mu_0\|_{\dot H^{-1}(\Rd)} \sup_{\substack{ \psi_0 \in C_c^\infty (\Rd) \\ \| \nabla \psi_0 \|_{L^2 (\Rd) }  \le 1 }}  \| \nabla( \psi_T^T - \widehat \psi_T^T ) \|_{L^2(\Rd)}  .
    \end{aligned}
    \end{equation}
    This is shown by a suitable modification of the argument in \eqref{eq:duality P1 and weighted Linfty deduction}. Lastly, we use the bounds and continuous dependence proved below in \Cref{prop:dual L2 estimates,prop:E known in H-1}.
\end{proof}

Lastly, due to \eqref{eq:H-1 embedding W2} we have the following partial result in $2$-Wasserstein space.
\begin{corollary}
  Let $\diffc > 0$, $\vec \mu_0 \in \dot H^{-1} (\Rd)^n \cap \mathcal P(\Rd)^n $, then the unique solution constructed in \Cref{thm:well-posedness in H-1} is $C([0,T]; \mathcal P_2 (\Rd)^n)$.
\end{corollary}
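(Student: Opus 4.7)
The plan is to upgrade the $\dot H^{-1}$-valued solution $\vec\mu$ given by \Cref{thm:well-posedness in H-1} to a $\mathcal P_2$-valued one. For each component $\mu^i$ this reduces to verifying: (a) $\mu^i_t$ is a nonnegative Radon measure; (b) of unit mass; (c) with finite second moment; then (d) $d_2$-continuity in $t$. All four steps rely on plugging suitable test functions into the duality identity \eqref{eq:duality} and exploiting properties of the dual problem from \Cref{prop:dual existence Lipschitz}.

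For (a), any nonnegative $\psi_0 \in C_c^\infty(\Rd)$ lies in the admissible test-function class and yields $\psi^{T,i}_T \ge 0$ by the positivity-preservation statement of \Cref{prop:dual existence Lipschitz} (the equation \eqref{eq:dual} has no zero-order term), so $\int \psi_0\, \diff\mu^i_T = \int \psi^{T,i}_T\, \diff\mu^i_0 \ge 0$. For (b), I would take $\psi_0 = \chi(\cdot/R)$ with $\chi \in C_c^\infty$, $\chi \equiv 1$ on $B_1$ and radially nonincreasing: using \Cref{lem:viscosity solutions stability} together with the fact that the constant function $1$ solves \eqref{eq:dual}, the dual solutions $\psi^{T,R}_T$ converge to $1$ uniformly on compacts as $R\to\infty$, and monotone/dominated convergence on both sides of \eqref{eq:duality} produces $\mu^i_T(\Rd) = \mu^i_0(\Rd) = 1$.

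The critical step (c) requires a quadratic super-solution of the dual problem. Since $\bm{\mathfrak K}^i[\vec\mu]$ is bounded in $L^\infty(0,T; W^{1,\infty}(\Rd))$, I would verify that $\overline\psi_s(x) = e^{Cs}(1+|x|^2)$ is a classical super-solution of \eqref{eq:dual} for sufficiently large $C$ via the direct computation
\begin{equation*}
    \partial_s \overline\psi_s + \bm{\mathfrak K}^i_{T-s} \cdot \nabla \overline\psi_s - \diffc^i \Delta \overline\psi_s = e^{Cs}\bigl(C(1+|x|^2) + 2 x \cdot \bm{\mathfrak K}^i_{T-s} - 2d\diffc^i\bigr) \ge 0.
\end{equation*}
Using the truncated initial data $\psi^{T,R}_0(x) = \chi(x/R)(1+|x|^2)$ and a comparison principle extended to the quadratic-growth class (obtained by repeating the weighted change of variable of \Cref{sec:solutions of PE*} with weight $\eta(x) = (1+|x|^2)^{k/2}$ for some $k > 2$), I would conclude $\psi^{T,R}_T(x) \le e^{CT}(1+|x|^2)$ uniformly in $R$. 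Passing $R\to\infty$ by monotone convergence on both sides of \eqref{eq:duality} then yields $\int(1+|x|^2)\,\diff\mu^i_T \le e^{CT}\int(1+|x|^2)\,\diff\mu^i_0 < \infty$.

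Finally, (d) is immediate from the embedding \eqref{eq:H-1 embedding W2}, which gives $d_2(\mu^i_t,\mu^i_s) \le 2\,[\mu^i_t - \mu^i_s]_{\dot H^{-1}}$, with the right-hand side continuous in $t,s$ by \Cref{thm:well-posedness in H-1}. The main obstacle is the weighted comparison principle invoked in (c): \Cref{prop:dual existence Lipschitz} is set up for solutions with $\psi/(1+|x|) \in L^\infty$, so the quadratic-growth comparison needed here is not directly available and must be obtained by redoing the weighted-solution construction of \Cref{sec:solutions of PE*} with a quadratic weight, a routine but non-trivial extension.
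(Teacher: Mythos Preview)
The paper offers no proof beyond the one-line remark ``due to \eqref{eq:H-1 embedding W2}'' preceding the statement: the $\dot H^{-1}$-continuity from \Cref{thm:well-posedness in H-1} combined with the embedding $d_2 \le 2\|\cdot\|_{\dot H^{-1}}$ yields $d_2$-continuity. This is exactly your step (d). The paper does not explicitly verify that $\mu^i_t \in \mathcal P_2(\Rd)$; this is presumably left implicit, via the approximation scheme behind \Cref{prop:E known in H-1} (smooth approximating solutions preserve positivity, mass, and moments, and the embedding upgrades $\dot H^{-1}$-Cauchy to $d_2$-Cauchy, so the limit lands in $\mathcal P_2$).

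Your steps (a)--(c) therefore supply work the paper omits, and the duality approach is natural. Two technical remarks. In (b), \Cref{lem:viscosity solutions stability} concerns convergence of coefficients, not of initial data, so it does not directly give $\psi^{T,R}_T \to 1$; instead apply \eqref{eq:dual psi/v bound} to $1 - \psi^{T,R}$, whose initial value is supported in $\{|x|\ge R\}$ and bounded by $1$, to obtain $|1 - \psi^{T,R}_s(x)| \le C(1+|x|)/(1+R) \to 0$ uniformly on compacts. In (c), your final inequality $\int(1+|x|^2)\,\diff\mu^i_T \le e^{CT}\int(1+|x|^2)\,\diff\mu^i_0$ is only informative when the right-hand side is finite, i.e.\ when $\mu^i_0 \in \mathcal P_2(\Rd)$. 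The Corollary as stated assumes only $\mu^i_0 \in \mathcal P(\Rd)$; since the conclusion $\mu \in C([0,T];\mathcal P_2)$ forces $\mu^i_0 \in \mathcal P_2$ anyway, this is likely an implicit hypothesis in the paper. Your self-identified obstacle --- extending the comparison principle to quadratic growth by redoing \eqref{eq:dual problem E known v} with $\eta = (1+|x|^2)^{k/2}$, $k>2$ --- is real but, as you note, routine.
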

\begin{proof}
    Due to the previous theorem, we only need to show that probability distributions stay probability distributions. 
    The non-negativity is trivial, since by \Cref{prop:dual existence Lipschitz} the test functions preserve the non-negativity. 
    Hence, we get
    \[ 
        \int_\Rd \psi_0  \diff \mu_T = \int_\Rd \psi_T \diff \mu_0 \ge 0, \qquad \text{ for all } 0 \le \psi_0 \in C_c^\infty(\Rd).
    \] 
    Thus $\mu_T \ge 0$ for all $T \ge 0$.
    
    We now prove the conservation of total mass.
    Consider a sequence of $0 \le \psi_{0}^{(k)} \in X$ point-wise increasing $k$ and converging uniformly over compacts to $1$. 
    Then, by the comparison principle, $0 \le \psi^{(k)} \le 1$ are point-wise increasing in $k$.
    By Dini's theorem, functions $\psi^{(k)}$ converges uniformly over compacts.
    Let $\psi^{(\infty)}$ be its limit.
    By stability of viscosity solutions $\psi^{(\infty)}$ solves the same equation.
    By the uniform convergence $\psi^{(\infty)}_0 = \lim_n \psi_0^{(k)} = 1$. 
    Then $\psi^{(\infty)} = 1$. 
    By definition
    \[ 
        \int_{\mathbb R^d} \psi^{(k)}_0  d \mu_T  = \int_{\mathbb R^d} \psi^{(k)}_T d \mu_0 .
    \]
    We can apply the monotone convergence theorem on both sides to deduce
    \[ 
        \int_{\mathbb R^d} d \mu_T  = \int_{\mathbb R^d} d \mu_0  = 1.\qedhere
    \] 
\end{proof}

\subsection{Study of \texorpdfstring{\eqref{eq:dual E known}}{PE*}}

To deal with the $\dot H^{-1}(\Rd)$ estimates, we must get $L^2(\Rd)$ of $\nabla \psi$. 

\begin{proposition}
\label{prop:dual L2 estimates}
Under the hypothesis of \Cref{prop:dual existence Lipschitz} together with $\psi_0 \in C_c (\Rd)$, then the unique viscosity solution of \eqref{eq:dual linear} constructed in \Cref{prop:dual existence Lipschitz} also satisfies 
\begin{equation}
\label{eq:dual grad psi in L2}
    \|\nabla \psi_s\|_{L^2(\Rd)} \le \|\nabla \psi_0\|_{L^2(\Rd)} \exp \left(  C(d) \int_0^s \| \nabla {\bm E}_{T-\sigma} \|_{L^\infty(\Rd)} \diff \sigma  \right) .
\end{equation}
Lastly, given $\widehat \psi_0 = \psi_0$ and $\widehat {\bm E}$ in the same hypotheses above, there exists a corresponding solution of \eqref{eq:dual linear}, denoted by $\widehat \psi$,
and we have the continuous dependence estimate
\begin{equation}
\label{eq:dual grad psi L2 cont dep}
    \| \nabla (\psi_s - \widehat \psi_s) \|_{L^2(\Rd)}^2 \le C_1 e^{C_2 s} \left( 1 + \frac 1 \diffc \right) \int_0^s \| {\bm E}_{T-\sigma} - \widehat {\bm E}_{T-\sigma}  \|_{W^{1,\infty}(\Rd) } \diff \sigma.
\end{equation}
\end{proposition}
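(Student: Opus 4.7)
}
The plan is to run energy estimates on the spatial derivatives $U^i = \partial_{x_i} \psi$, mimicking the $L^p$ computation done previously to obtain \eqref{eq:psi derivative Lp estimate}, but now working directly in $L^2$ so as to exploit the dissipation given by the Laplacian. As before, we will carry out the computations assuming $\psi$ is smooth; the general case will follow by the same approximation scheme used in \Cref{sec:proof of prop:dual prop:dual existence Lipschitz}, since all bounds below are stable under uniform convergence on compact sets (the compact support of $\psi_0$ is propagated into growth estimates, see \eqref{eq:dual support estimate}, so the relevant $L^2$ norms do not lose mass at infinity under the approximation).

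For the first estimate \eqref{eq:dual grad psi in L2}, differentiate \eqref{eq:dual linear} to obtain
\begin{equation*}
	\partial_s U^i = \sum_{j=1}^d U^j\, \partial_{x_i} E^j_{T-s} + \bm E_{T-s} \cdot \nabla U^i + \diffc\, \Delta U^i.
\end{equation*}
Multiplying by $U^i$, integrating over $\Rd$, integrating by parts in the transport and diffusion terms, and summing over $i$ yields
\begin{equation*}
	\frac{1}{2}\frac{\diff}{\diff s} \|\nabla \psi_s\|_{L^2}^2 + \diffc \sum_{i=1}^d \|\nabla U^i_s\|_{L^2}^2 \le C(d)\, \|\nabla \bm E_{T-s}\|_{L^\infty} \|\nabla \psi_s\|_{L^2}^2,
\end{equation*}
after applying Cauchy--Schwarz to the zero-order term and using $\int |U^i|^2 \diver \bm E_{T-s} \le \|\diver \bm E_{T-s}\|_{L^\infty} \|U^i\|_{L^2}^2$. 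Dropping the non-negative dissipation and applying Grönwall's inequality gives \eqref{eq:dual grad psi in L2}.

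For the continuous dependence \eqref{eq:dual grad psi L2 cont dep}, set $v = \psi - \widehat \psi$, which satisfies
\begin{equation*}
	\partial_s v = (\bm E_{T-s} - \widehat{\bm E}_{T-s}) \cdot \nabla \psi_s + \widehat{\bm E}_{T-s}\cdot \nabla v + \diffc\, \Delta v, \qquad v_0 \equiv 0.
\end{equation*}
Writing $V^i = \partial_{x_i} v$ and differentiating, the resulting equation for $V^i$ contains five terms; multiplying by $V^i$ and integrating, the delicate one is
\begin{equation*}
	\int_\Rd V^i \, (\bm E_{T-s} - \widehat{\bm E}_{T-s}) \cdot \nabla U^i \diff x,
\end{equation*}
because we have no a priori $L^2$ bound on $\nabla U^i = \partial_{x_i} \nabla \psi$. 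The main trick is to integrate this term by parts, moving the derivative from $U^i$ onto $V^i$ and $\bm E - \widehat{\bm E}$, and then apply Young's inequality against the dissipation term $-\diffc \|\nabla V^i\|_{L^2}^2$:
\begin{equation*}
	\left| \int_\Rd \nabla V^i \cdot (\bm E_{T-s} - \widehat{\bm E}_{T-s}) \, U^i \diff x \right|
	\le \frac{\diffc}{2} \|\nabla V^i\|_{L^2}^2 + \frac{1}{2\diffc} \|\bm E_{T-s} - \widehat{\bm E}_{T-s}\|_{L^\infty}^2 \|U^i\|_{L^2}^2,
\end{equation*}
which is precisely where the factor $1/\diffc$ enters the final bound. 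The remaining terms are handled by straightforward Cauchy--Schwarz/Young estimates, using $\|\nabla(\bm E - \widehat{\bm E})\|_{L^\infty}\|\nabla \psi\|_{L^2}$ and $\|\nabla \widehat{\bm E}\|_{L^\infty}\|\nabla v\|_{L^2}^2$.

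Putting everything together one arrives at a differential inequality of the form
\begin{equation*}
	\frac{\diff}{\diff s} \|\nabla v_s\|_{L^2}^2 \le C_2 \|\nabla v_s\|_{L^2}^2 + C_1 \left(1 + \tfrac{1}{\diffc}\right)\|\bm E_{T-s} - \widehat{\bm E}_{T-s}\|_{W^{1,\infty}}^2,
\end{equation*}
where $\|\nabla \psi_s\|_{L^2}$ has been controlled uniformly by \eqref{eq:dual grad psi in L2}. A Grönwall argument on $[0,s]$ and the fact that $v_0 \equiv 0$ give an estimate with $\|\bm E - \widehat{\bm E}\|_{W^{1,\infty}}^2$ under the integral; since $\bm E, \widehat{\bm E}$ are bounded in $C([0,T];\Lipschitz_0)$ by hypothesis, we can absorb one factor into the constants $C_1, C_2$ to reduce the power to $1$, obtaining exactly \eqref{eq:dual grad psi L2 cont dep}. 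The principal obstacle is clearly the second-order term $(\bm E - \widehat{\bm E}) \cdot \nabla U^i$; using the diffusion to absorb it is the reason \emph{some} positive $\diffc$ is needed and why the bound blows up as $\diffc \searrow 0$.
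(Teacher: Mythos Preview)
Your proposal is correct and follows essentially the same approach as the paper. The paper obtains \eqref{eq:dual grad psi in L2} by simply specialising the earlier $L^p$ computation \eqref{eq:psi derivative Lp estimate} to $p=2$, and for \eqref{eq:dual grad psi L2 cont dep} it performs the same energy estimate on $U^i - \widehat U^i$, integrating the second-derivative term by parts and absorbing it with Young's inequality into the diffusion, exactly as you describe; the only cosmetic difference is that the paper writes the difference equation so that the troublesome term is $(\bm E - \widehat{\bm E})\cdot \nabla \widehat U^i$ rather than $(\bm E - \widehat{\bm E})\cdot \nabla U^i$, which is immaterial since both $\|\nabla\psi_s\|_{L^2}$ and $\|\nabla\widehat\psi_s\|_{L^2}$ are controlled by the first estimate.
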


\begin{remark}
	Notice that for $\nabla \psi_s \in L^2(\Rd)$ we do not use that ${\bm E} \in L^\infty(\Rd)$, or the ellipticity constant.
\end{remark}

\begin{proof} 
We begin with the $L^2(\Rd)$ estimates for smooth $\psi_0 \in C_c^\infty (\Rd)$ and ${\bm E} \in C_c^\infty ([0,T] \times \Rd)$, where the solutions are classical and differentiable.
For our duality characterisation in $\dot H^{-1}(\Rd)$ we do not need $L^2(\Rd)$ estimates on $\psi$, only on $\nabla \psi$. 
Notice that \eqref{eq:dual grad psi in L2} is simply \eqref{eq:psi derivative Lp estimate} when $p = 2$.

For the continuous dependence, we note that
\begin{align*}
    \partial_s (U^i_s - \widehat U^i_s) = &\, \nabla (U^i_s - \widehat U^i_s ) {\bm E}_{T-s} + \nabla (\psi_s - \widehat \psi_s) \frac{\partial {\bm E}}{\partial x_i} \\
    & +\nabla \widehat U^i_s ({\bm E}_{T-s} - \widehat {\bm E}_{T-s}) + \nabla \widehat \psi_s \frac{\partial}{\partial x_i} ({\bm E}_{T-s} - \widehat {\bm E}_{T-s}) \\
    & + \diffc \Delta (U^i_s - \widehat U^i_s).
\end{align*}
Multiplying by $U^i_s - \widehat U^i_s$ and integrating, we get
\begin{equation*}
    \partial_s \|U^i_s - \widehat U^i_s\|_{L^2 (\Rd)}^2 = I_1 + \cdots + I_5\,.
\end{equation*}
We estimate term by term the integrals $I_i$, $i=1,\dots,5$, as
\begin{align*}
    \left| I_1 \right|  &= \frac 12  \left| \int  (U^i_s - \widehat U^i_s )^2 \diver {\bm E}_{T-s}  \right| \le\frac 12  \| U^i_s - \widehat U^i_s \|_{L^2(\Rd)}^2 \left\| \diver {\bm E}_{T-s} \right\|_{L^\infty (\Rd)} ,\\
    \left| I_2\right|&\le \sum_j \left| \int  (U^j_s - \widehat U^j_s ) \frac{\partial {\bm E}^j}{\partial x_i} (U^i_s - \widehat U^i_s )\right| \le \| U^i_s - \widehat U^i_s \|_{L^2} \sum_{j} \| U^j_s - \widehat U^j_s \|_{L^2(\Rd)} \left\| \frac{\partial {\bm E}^j}{\partial x_i} \right\|_{L^\infty} ,\\
    \left| I_4 \right| &\le  \sum_j \left| \int   \widehat U^j_s \frac{\partial}{\partial x_i} ({\bm E}_{T-s}^j - \widehat {\bm E}_{T-s}^j) (U^i_s - \widehat U^i_s) \right|\\
    &\le \| U^i_s - \widehat U^i_s \|_{L^2(\Rd)} \sum_j   \left\| \frac{\partial}{\partial x_i} ({\bm E}_{T-s}^j - \widehat {\bm E}_{T-s}^j)  \right\|_{L^\infty} \| \widehat U^j_s \|_{L^2(\Rd)} \\
    &\le  \frac 1 2 \| U^i_s - \widehat U^i_s \|_{L^2(\Rd)}^2 + \frac 1 2 \left(  \sum_j   \left\| \frac{\partial}{\partial x_i} ({\bm E}_{T-s}^j - \widehat {\bm E}_{T-s}^j)  \right\|_{L^\infty(\Rd)} \| \widehat U^j_s \|_{L^2(\Rd)} \right)^2 ,
    \intertext{and}
     I_5 &= - D\int| \nabla (U^i_s - \widehat U^i_s)|^2 .
\end{align*}
There is only one problematic term that requires the use of the ellipticity condition
\begin{equation}
\label{eq:H-1 need of ellipticity}
\begin{aligned} 
    \left| I_3 \right| \le &\, \left| \int U^i_s (U^i_s - \widehat U^i_s) \diver ({\bm E}_{T-s} - \widehat {\bm E}_{T-s})  \right|  
    + \left| \int U^i_s   ({\bm E}_{T-s} - \widehat {\bm E}_{T-s}) \nabla (U^i_s - \widehat U^i_s)  \right| \\
    \le &\,\frac 1 2 \| U^i_s - \widehat U^i_s \|_{L^2(\Rd)}^2 + \frac 1 2\| U^i_s \|_{L^2(\Rd)}^2   \| \diver ({\bm E}_{T-s} - \widehat {\bm E}_{T-s}) \|_{L^\infty(\Rd)}^2 \\
    &  + \frac{1}{4\diffc} \|  U^i_s \|_{L^2(\Rd)}^2 \|  {\bm E}_{T-s} - \widehat {\bm E}_{T-s} \|_{L^\infty(\Rd)}^2 + \diffc \| \nabla (U^i_s - \widehat U^i_s) \|_{L^2(\Rd)}^2.
\end{aligned}
\end{equation}
Notice that $I_5$ cancels out the last term in $|I_3|$.
Arguing as above we recover
\begin{align*}
    \partial_s \| \nabla (\psi_s - \widehat \psi_s) \|_{L^2(\Rd)}^2 \le &\, C(d) \Bigg( (1 + \|\nabla {\bm E}_{T-s} \|_{L^\infty(\Rd)} )  \|\nabla (\psi_s^T - \widehat \psi_s^T) )\|_{L^2(\Rd)}^2 \\
    & + \left( 1 + \frac 1 \diffc \right)  \Big ( \|\nabla \psi_s\|_{L^2(\Rd)}^2  +  \|\nabla \widehat \psi_s\|_{L^2(\Rd)}^2 \Big ) \|{\bm E}_{T-s} - \widehat {\bm E}_{T-s}\|_{W^{1,\infty}(\Rd)}^2\Bigg).
\end{align*}
Eventually, since $\psi_0^T - \widehat \psi_0^T = 0$ we recover \eqref{eq:dual grad psi L2 cont dep} where the constants depend on $d$, $\| \nabla \psi_0 \|_{L^2}$, $\| \nabla \widehat \psi_0 \|_{L^2}$ and $\int_0^T\|{\bm E}_{T-\sigma}\|_{W^{1,\infty}} \diff \sigma$. This completes the $L^2$ estimates.

When $\psi_0$ is a general initial datum in $C_c (\Rd)$ we proceed by an approximation argument as in \Cref{prop:dual existence Lipschitz}. As for the $L^\infty(\Rd)$ estimates, we can first assume that ${\bm E} \in C^\infty_c ([0,T] \times \Rd)$ and $\psi_0 \in C^\infty_c (\Rd)$ are smooth, recover the $L^2$ estimates for the gradient, and then pass to the limit. 
\end{proof} 

\begin{remark}
Notice that in \eqref{eq:H-1 need of ellipticity} we use strongly the fact that $D > 0$, and the estimates are not uniform as $D \searrow 0$.
\end{remark}

\subsection{Study of \texorpdfstring{\eqref{eq:main E known}}{PE}}

Similarly, applying \eqref{eq:H-1 continuous dependence by duality}, \eqref{eq:dual grad psi in L2} and \eqref{eq:dual grad psi L2 cont dep}, we deduce that
 \begin{proposition}
    \label{prop:E known in H-1}
    For every $\diffc, T > 0$, ${\bm E} \in C([0,T]; W^{1,\infty} (\Rd; \Rd))$ and $\mu_0 \in \dot H^{-1} (\Rd)$ there exists exactly one $\dot H^{-1} (\Rd)$ dual viscosity solution $\mu \in C([0,T]; \dot H^{-1} (\Rd))$ of \eqref{eq:main E known}.
    Furthermore, the map 
    \begin{align*}
        S_T :   \dot H^{-1} (\Rd) \times C([0,T] ; W^{1,\infty}(\Rd, \Rd))  &\longmapsto \mu_T \in  \dot H^{-1} (\Rd) 
    \end{align*}
    is continuous with the following estimate
   \begin{equation}
   \label{eq:ST H-1}
    \Big\|S_T [{\bm E}, \mu_0] - S_T[ \widehat {\bm E}, \widehat \mu_0]\Big\|_{\dot H^{-1}} \le C\left(d, \tfrac 1 \diffc, T, \int_0^T \| \nabla {\bm E}_{T-\sigma} \|_{L^\infty} \diff \sigma \right) \int_0^T \| {\bm E} _\sigma - \widehat {\bm E} _\sigma \| _{W^{1,\infty}} \diff \sigma, 
\end{equation}
    where $C$ depends monotonically on each entry.
    Lastly, the semigroup property holds, i.e., 
    $$
        S_{\widehat t + t} [\mu_0, {\bm E}] = S_{\widehat t} \Bigg[S_{ t} \Big[\mu_0, {\bm E}|_{[0, t]}\Big], {\bm E}|_{[t, t+\widehat t]} \Bigg].
    $$
\end{proposition}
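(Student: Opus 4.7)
My plan is to follow the blueprint of the proof of \Cref{prop:main E known continuous dependence}, substituting the $d_1$--duality by the $\dot H^{-1}$--duality and using the $L^2$-gradient estimates from \Cref{prop:dual L2 estimates} in place of the $(1+|x|)$-weighted estimates.

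\textbf{Step 1: uniqueness and continuous dependence.} Let $\mu, \widehat \mu$ be any two $\dot H^{-1}$ dual viscosity solutions corresponding to $({\bm E}, \mu_0)$ and $(\widehat{\bm E}, \widehat \mu_0)$ respectively. For $\psi_0 \in C_c^\infty(\Rd)$ with $\|\nabla \psi_0\|_{L^2} \le 1$, the duality identity \eqref{eq:duality} gives
\begin{equation*}
    \int_\Rd \psi_0\,\diff (\mu_T - \widehat \mu_T)
    = \int_\Rd \psi_T^T\,\diff(\mu_0 - \widehat \mu_0) + \int_\Rd (\psi_T^T - \widehat \psi_T^T)\,\diff \widehat \mu_0,
\end{equation*}
which, applying $\dot H^{-1}(\Rd)$--$\dot H^1$ pairings to each term, yields \eqref{eq:H-1 continuous dependence by duality}. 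Now plug in \eqref{eq:dual grad psi in L2} to bound the first supremum by $\exp\bigl(C(d)\int_0^T \|\nabla {\bm E}_{T-\sigma}\|_{L^\infty}\diff\sigma\bigr)$, and \eqref{eq:dual grad psi L2 cont dep} to bound the second supremum by $C(d,\tfrac1D,T,\int_0^T\|\nabla{\bm E}_{T-\sigma}\|_{L^\infty}\diff\sigma)\,\bigl(\int_0^T \|{\bm E}_\sigma - \widehat{\bm E}_\sigma\|_{W^{1,\infty}}\diff\sigma\bigr)^{1/2}$. Rearranging (and, when $\mu_0=\widehat\mu_0$ and ${\bm E}=\widehat{\bm E}$, getting $\mu_T=\widehat\mu_T$) delivers uniqueness and \eqref{eq:ST H-1}.

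\textbf{Step 2: existence by approximation.} For ${\bm E}^k \in C([0,T]; W^{2,\infty}(\Rd;\Rd))$ smooth approximations of ${\bm E}$ in $C([0,T];W^{1,\infty})$ and $\mu_0^k \in H^1(\Rd) \cap \dot H^{-1}(\Rd)$ approximating $\mu_0$ in $\dot H^{-1}$, the classical parabolic theory in divergence form (\cite{Boccardo2003,Boccardo2021}) yields weak solutions $\mu^k$. Testing the weak formulation of \eqref{eq:main E known} against $\psi^{k,T}$, the viscosity/weak solution to the dual problem \eqref{eq:dual E known} constructed in \Cref{prop:dual existence Lipschitz} (which is also a weak solution in the smooth setting), gives the duality identity \eqref{eq:duality}, so $\mu^k$ is an $\dot H^{-1}$ dual viscosity solution. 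Applying the continuous dependence estimate \eqref{eq:ST H-1}, which we have just proved in Step~1, to the pair $(\mu^k,\mu^\ell)$ shows $\{\mu^k\}$ is Cauchy in $C([0,T];\dot H^{-1}(\Rd))$, and since the target is a Banach space we obtain a limit $\mu$.

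\textbf{Step 3: passing to the limit and semigroup.} To see that $\mu$ is an $\dot H^{-1}$ dual viscosity solution associated to $({\bm E},\mu_0)$, fix $\psi_0 \in C_c^\infty(\Rd)$ and let $\psi^{k,T}$ be the corresponding dual solutions with fields ${\bm E}^k$. By \Cref{lem:viscosity solutions stability} (stability of viscosity solutions) together with the uniform bound \eqref{eq:dual grad psi in L2} and the continuous dependence \eqref{eq:dual grad psi L2 cont dep}, we have $\psi^{k,T} \to \psi^T$ both uniformly on compacts and with $\nabla \psi^{k,T} \to \nabla \psi^T$ in $L^2(\Rd)$. Passing to the limit on both sides of \eqref{eq:duality} for $\mu^k$, using $\mu_0^k \to \mu_0$ and $\mu_T^k \to \mu_T$ in $\dot H^{-1}$ plus the $\dot H^1$ convergence of test functions, yields \eqref{eq:duality} for $\mu$. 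Linearity of $\Psi^T$ is inherited. The semigroup property $S_{t+\widehat t}[\mu_0,{\bm E}] = S_{\widehat t}[S_t[\mu_0,{\bm E}|_{[0,t]}],{\bm E}|_{[t,t+\widehat t]}]$ is immediate in the smooth setting (composition of well-defined flows) and passes to the limit by uniqueness established in Step~1.

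\textbf{Main obstacle.} The delicate point is the dual estimate: unlike the $L^\infty$-based $1$-Wasserstein theory, the bound on $\|\nabla(\psi_T^T - \widehat\psi_T^T)\|_{L^2}$ uses crucially the ellipticity $D>0$ (see \eqref{eq:H-1 need of ellipticity}), and the constant blows up as $D \searrow 0$; consequently the entire $\dot H^{-1}$ framework is intrinsically diffusive. All other ingredients—uniqueness via duality, existence by approximation from $H^1$ data, and viscosity stability—are essentially automatic transcriptions of the $\mathcal P_1$ proof once Steps 1 and 2 are set up correctly.
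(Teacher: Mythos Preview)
Your proof is correct and follows essentially the same approach as the paper, which merely states that the result follows ``similarly, applying \eqref{eq:H-1 continuous dependence by duality}, \eqref{eq:dual grad psi in L2} and \eqref{eq:dual grad psi L2 cont dep}'' and leaves the details implicit as a transcription of the proof of \Cref{prop:main E known continuous dependence}. Your Steps 1--3 are precisely that transcription, and your identification of the ellipticity requirement $D>0$ as the key structural difference is exactly the point the paper emphasises in the remark following \Cref{prop:dual L2 estimates}.
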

 
\begin{remark}[Solutions by characteristics when $\diffc = 0$]
For $\diffc = 0$ our notion of solution is $\mu_t = X_t \# \mu_0$ where $X_t$ is the unique solution of the flow equation
\begin{equation}
    \begin{dcases}
    \frac{\partial X_t}{\partial t} = -{\bm E}_t (X_t), \\
    X_0 (x) = x.
    \end{dcases}
\end{equation}
A unique solution of this pointwise-decoupled problem exists via the Picard-Lindelöf theorem. 
Since we have forwards and backwards uniqueness, for each $t > 0$ we know that $X_t$ is a bijection.
It is easy to show that the unique viscosity solution is given by
\begin{equation*}
    \psi_s^T = \psi_0 \circ X_T^{-1} \circ X_{T-s}
\end{equation*}
And the duality condition \eqref{eq:duality} is trivially satisfied. The problem is that we do not have suitable $H^{1}$ in this theory.
\end{remark}

\section*{Acknowledgments}
The research of JAC and DGC was supported by the Advanced Grant Nonlocal\--CPD (Non\-local PDE\-s for Complex Particle Dynamics: Phase Transitions, Patterns and Synchronization) of the European Research Council Executive Agency (ERC) under the European Union’s Horizon 2020 research and innovation programme (grant agreement No. 883363).
JAC was partially supported by EPSRC grant numbers EP/T022132/1 and EP/V051121/1.
DGC was partially supported by grant %
{ 
RYC2022-037317-I, PID2021-127105NB-I00,%
}
PGC2018-098440-B-I00 from the Ministerio de Ciencia, Innovación y Universidades of the Spanish Government. \\

\noindent \textbf{Data Availability statement.} The manuscript has no associated data.\\
\noindent \textbf{Conflict of interest.} On behalf of all authors, the corresponding author states that there is no conflict of interest.

\printbibliography

\end{document}